\newtheorem{thm}{Theorem}[section]
\newtheorem{cor}[thm]{Corollary}
\newtheorem{propo}[thm]{Proposition}
\newtheorem*{reduc2}{Reduction Theorem} 
\newtheorem*{funchom}{Homogeneous Function Theorem}
\newtheorem*{funthm}{First Fundamental Theorem of Sp}
\theoremstyle{definition}
\newtheorem{defin}[thm]{Definition}
\newtheorem{rem}[thm]{Remark}
\numberwithin{equation}{section}
\def\C{\mathcal{C}}
\def\F{\mathcal{F}}
\def\FF{\mathcal{F}'}
\def\RR{\mathbb{R}}
\def\R2n{\mathbb{R}^{2n}}
\def\Z2{\mathbb{Z}/2 \mathbb{Z}}
\def\Cs{\mathcal{C}^{\mathrm{sym}}}
\def\Cw{\C_{\eta}}
\def\T{\mathcal{T}}
\renewcommand{\d}{\ensuremath{\mathrm{d}}}
\def\dd{\ensuremath{\mathrm{d}}}
\def\TT{\mathbb{T}}
\def\qed{\hfill $\square$}
\def\Diff{\protect \mathrm{Diff}_{x_0}}
\def\Autw{\protect \mathrm{Aut}(\eta)}
\def\Auts0{\protect \mathrm{Aut}(s_0)}
\def\FDiff{\protect \mathrm{Aut}(\eta)_{x_0}}
\def\Gl{{\rm Gl}}
\def\Sl{{\rm Sl}}
\def\Sp{{\rm Sp}}
\def\Spp{{\rm Sp}(2n,\RR)}
\def\On{\mathrm{O}(s_+,s_-)}
\def\SOn{\mathrm{SO}(s_+,s_-)}
\newcommand{\Dif}{\protect \mbox{Diff}\,}
\newcommand{\Con}{\protect \mbox{Conn}\,}
\newcommand{\Fedm}{\protect \mbox{Fed}^m_{\scriptsize x_0}\,}
\newcommand{\Conw}{\protect \mbox{Conn}_{\eta}\,}
\newcommand{\Conwpinf}{\protect \mbox{Conn}_{\scriptsize \omega }\,}
\newcommand{\diffmmasdos}{\ensuremath{{\protect \mathrm{Diff}^{m+2}_{\scriptsize x_0}}}} 
\newcommand{\diffinfty}{\ensuremath{{\protect \mathrm{Diff}^{\infty}_{\scriptsize x_0}}}}
\newcommand{\ndiffmmasdos}{\ensuremath{{\protect \mathrm{NDiff}^{m+2}_{\scriptsize x_0}}}} 
\newcommand{\ndiffinfty}{\ensuremath{{\protect \mathrm{NDiff}^{\infty}_{\scriptsize x_0}}}}
\begin{document}

\baselineskip=17pt

\title[Invariant operations of a Fedosov structure]{On invariant operations of Fedosov structures}

\author[A. Gordillo-Merino]{{Adri\'{a}n}~Gordillo-Merino}
\address{Departamento de Didáctica de las Ciencias Experimentales y Matemáticas \\ Universidad de Extremadura \\ E-06071 Badajoz, Spain}
\email{adgormer@unex.es}\thanks{The authors have been partially supported by Junta de Extremadura and FEDER funds with project IB18087, as well as by projects GR18001 and GR21055 in the case of the second and third authors. The second author was additionally supported by the grant ``Plan 
Propio de Iniciación a la Investigación, Desarrollo Tecnológico e Innovación'' of Universidad de Extremadura.}

\author[R. Mart\'inez-Boh\'orquez]{Ra\'ul~Mart\'inez-Boh\'orquez}
\address{Departamento de Matem\'{a}ticas \\ Universidad de Extremadura \\ E-06071 Badajoz, Spain}
\email{raulmb@unex.es}

\author[J. Navarro]{Jos\'{e}~Navarro-Garmendia}
\address{Departamento de Matem\'{a}ticas \\ Universidad de Extremadura \\ E-06071 Badajoz, Spain}
\email{navarrogarmendia@unex.es}

\begin{abstract} In this paper we study invariant local operations that can performed on a Fedosov manifold, with a particular emphasis on tensor-valued operations (also known as natural tensors).
Our main result describes the spaces of homogeneous natural tensors as certain finite dimensional linear representations of the symplectic group. 

\end{abstract}

\subjclass[2010]{Primary: 53A55; Secondary: 58A32}

\keywords{Natural operations, Fedosov manifolds, symplectic group}

\date{\today}
\maketitle


\section{Introduction}

The notion of invariant operation has been key to the development of differential geometry and many of its applications. A paradigmatic example is its relevance in the early days of the nascent theory of General Relativity (\cite{M_BOOK}).  As time went by, the theory of these invariant operations evolved and produced significant mathematical results, such as the characterisation of the Pontryagin forms on Riemannian manifolds (\cite{GILKEY, ABP}) or the proof of the uniqueness of the Chern–Gauss–Bonnet formula (\cite{GILKEY_ANNALS}), both found by P. Gilkey during the mid-70s. 

In 1993, Kol\'a\v{r}-Michor-Slov\'ak (\cite{KMSBOOK}) published the monograph which has become the standard reference in this subject since then.  It summarises and enhances the main results and techniques that were known up to that point. However, this book is written with a functorial language that, outside specialists on the field,  has certainly not become standard; this has probably motivated that, in recent years, there have appeared various references that rewrite some of its most prominent results (\cite{FHBAMS, KMM, NS_DGA}).

Among the invariant operations that can be performed on a manifold,  tensor-valued operations are particularly relevant.  Also known as {\it natural tensors},  their description in the easiest possible terms has  always been a relevant question.  In presence of a linear connection, the main result of the theory describes these spaces of natural tensors as certain finite-dimensional linear representations of a classical Lie group: the linear groups $\Gl_n$ or $\Sl_n$ when considering natural tensors associated to linear connections (\cite{GMN_RACSAM}, \cite{GMN_MATH}, \cite{S_JDG}), the orthogonal groups $\On$ or $\SOn$ when considering natural tensors associated to pseudo-Riemannian metrics (\cite{NS_EINSTEIN}, \cite{T_JMP}), or the unitary groups $\mathrm{U}_n$ or $\mathrm{SU}_n$ for the corresponding  case of Kähler metrics (\cite{GPS_KAHLER}, \cite{T_JMP}). This description permits classical invariant theory to come into play and, in certain cases, to achieve this way an exhaustive computation of the spaces of natural tensors under consideration (see, for example, \cite{ABP},  \cite{GPS_RIEMANN}, \cite{GMN_RACSAM} or \cite{NS_EINSTEIN}).

Nevertheless, in this picture above,  the symplectic group was missing; in other words,  there was no theorem describing natural tensors associated to the so called Fedosov structures. 
Fedosov manifolds constitute the skew-symmetric version of Riemannian manifolds: they are defined as a triple $(X,\omega,\nabla)$, where $X$ is a smooth manifold of even dimension, $\omega$ is a symplectic form and $\nabla$ is a symplectic connection, that is, a symmetric linear connection such that $\nabla \omega = 0$. They are named after B. Fedosov,  who first constructed a canonical deformation quantization on these manifolds (\cite{FEDOSOV_1}, \cite{FEDOSOV}). 

The remedy to this situation started in 1998,  when Gelfand-Retakh-Shubin (\cite{GELFAND}) proved that any finite order, natural tensor associated to a Fedosov structure is indeed a function of the curvature and its successive derivatives.  This nice result, however, still had strong limitations:  it did not allow the use of the invariant theory of the symplectic group yet, and it imposed a strong finiteness hypothesis on the order of the local invariants.  

In this paper, we overcome this inconvenience and prove a statement (Theorem \ref{MainThmFed}) that describes natural tensors associated to Fedosov structures in terms of certain finite-dimensional linear representations of the symplectic group $\Sp (2n, \mathbb{R})$. Our theorem is completely analogous to the aforementioned results for linear connections or Riemannian metrics; in particular, it imposes no restrictions on the order of the natural tensors and it allows the use of classical invariant theory.  We plan to exploit these features in the future,  as it is plausible that they will allow the computation of interesting dimensional curvature identities,  analogous to those in \cite{GPS_RIEMANN} or \cite{GPS_KAHLER}, as well as another approach to moduli spaces of jets of Fedosov structures,  different to that used in  \cite{DUBROVSKIY}.

Finally, let us mention that the use of the language of sheaves and ringed spaces, much in the spirit of our previous works \cite{GMN_RACSAM} and \cite{GMN_MATH}, plays in this paper an essential role, especially to get rid of the finite order conditions of other developments.

\section{Statement of the Main Theorem}

Let $X$ be a smooth manifold of dimension $n$. Let $\Dif (X)$ denote the set of local diffeomorphisms\footnote{Throughout this text, the term \textit{diffeomorphism} will refer to a local diffeomorphism between two open subsets of a smooth manifold, unless explicitly otherwise stated.} between open subsets of $X$.

\begin{defin}
Let $\pi \colon F \rightarrow X$ be a (fibre) bundle over $X$. A  natural bundle over $X$ is a bundle $F \rightarrow X$ together with a map
\begin{align*}
\Dif (X) & \longrightarrow  \Dif (F)\\ 
\tau \ & \longmapsto  \ \tau_* \qquad ,
\end{align*}
called lifting of diffeomorphisms, satisfying the following properties \footnote{In the literature, a condition of regularity is added to the definition of natural bundle (the lifting of any smooth family of diffeomorphisms is smooth too). However, this property can be derived from the other two (see \cite{ET}). }:
\begin{itemize}
\item If $\tau \colon U \to V\,$ is a diffeomorphism between open subsets of $X$, then $\tau_* \colon F_U \to F_V\,$ is a diffeomorphism covering $\tau$, i.e. it makes the following square commutative:
\[
\xymatrix{
F_U \ \ar[r]^-{\tau_*}_{\sim} \ar[d]_-{\pi}  & \ F_V \ar[d]^-{\pi} \\
U \ar[r]^-{\tau}_{\sim} &  V   \ ,
} 
\] where $F_U := \pi^{-1} (U)$ and $F_V := \pi^{-1} (V)$.
\item Functoriality: $\mathrm{Id}_* = \mathrm{Id}$ and $(\tau \circ \tau')_*=( \tau)_*\circ (\tau')_*$.
\item Locality: for any diffeomorphism $\tau : U \rightarrow V$ and any open subset $U' \subset U$, $ (\tau_{|U'})_* = (\tau_*)_{|F_{U'}}$.
\end{itemize} 

%
\end{defin}


\begin{defin}
A natural sheaf $\F$ over $X$ is a subsheaf of the sheaf of smooth sections of a natural bundle $F\to X$ over $X$ such that, for any diffeomorphism $\tau: U \to V$, the morphism
\begin{align*}
\tau_* \colon  \F(U) &\longrightarrow \ \F(V)  \\
s \ & \longmapsto \ \tau_*\circ s \circ \tau^{-1}
\end{align*}
is well defined\footnote{Observe that we are committing an abuse of notation: we are denoting by $\tau_*$ both the lifting of $\tau$ to $F$ and the `action' of $\tau$ on $\F$. However, the context will help clarify which morphism we are working with. }.
\end{defin}

\noindent \textbf{Examples:} 
\begin{enumerate}
\item Let $F\to X$ be a natural bundle. It is easy to prove that the sheaf of smooth sections of $F$ is a natural sheaf, using that the lifting covers the lifted diffeomorphism. As such, the sheaf $\T_p^q$ of $(p,q)$-tensors over $X$ is a natural sheaf.
 
\item The Fedosov sheaf, defined on any open subset $U\subseteq X$ as
$$\F(U):=\{(\omega,\nabla)\in (\Lambda^2 \times \Cs) (U): \nabla \omega = 0\} \ ,$$
is a natural sheaf, where $\Lambda^2$ denotes the sheaf of non-singular 2-forms on $X$ and $\Cs$ denotes the sheaf of symmetric linear connections on $X$. Observe that the condition $\nabla \omega = 0$ is natural: if $(\omega, \nabla) \in \F (U)$, then $(\tau_* \nabla)(\tau_* \omega)=0$.   
\end{enumerate}

\begin{defin}
Let $\F$ and $\FF$ be natural sheaves over $X$. A morphism of sheaves $\phi : \F \rightarrow \FF$ is natural if it is regular\footnote{The regularity condition is technical in nature, and as such it will be properly defined in Section \ref{SectionPeetre}, Definition \ref{DefiRegular}.} and commutes with the action of diffeomorphisms on sections; that is to say, if for any diffeomorphism $\tau :U \rightarrow V$, the following square commutes:
\begin{equation}
\xymatrix{
\F(U) \ar[r]^\phi \ar[d]_{\tau_*}   & \FF (U) \ar[d]^{\tau_*} \\
\F (V) \ar[r]^\phi & \FF (V) \, ,
}
\end{equation}
where $\tau_* \colon \F (U) \to \F (V)\,$ is defined as follows:
\begin{align*}
\tau_* \colon  \F(U) &\longrightarrow \ \F(V)  \\
s \ & \longmapsto \ \tau_*\circ s \circ \tau^{-1} \ .
\end{align*}
\end{defin}

\begin{defin}
A natural morphism of sheaves $\F \rightarrow \T$ between the Fedosov sheaf $\F$ and a sheaf of tensors $\T$ over $X$ is called a natural tensor (associated to Fedosov structures). 
\end{defin}

%

A condition of homogeneity is required to guarantee that the natural tensors depend on a finite amount of variables only:

\begin{defin}
Let $\delta\in \RR$. We say that a natural tensor $T:\F \rightarrow \T$ is homogeneous of weight $\delta$ if, for all non-zero $\lambda\in \RR$, it holds that\footnote{Observe that if $(\omega,\nabla)$ is a Fedosov structure, then $(\lambda \omega , \nabla)$ is also a Fedosov structure for any $\lambda \in \RR \setminus
 \{0\}$.}:
$$T(\lambda^2 \omega, \nabla)=\lambda^\delta T(\omega, \nabla) \ .
$$
\end{defin}

\noindent \textbf{Examples:} 
\begin{itemize}
\item The symplectic form can be understood as a natural $(2,0)$-tensor associated to Fedosov structures whose value on a Fedosov structure $(\omega, \nabla)$ is $\omega$. It is homogeneous of weight $2$.
\item The $(4,0)$ curvature operator, defined as a natural $(4,0)$-tensor whose value on a Fedosov structure $(\omega,\nabla)$ defined on an open set $U\subset X$ is:
$$ R_{(\omega,\nabla)} (D_1 , D_2 , D_3 , D_4) := \omega(D_1, \nabla_{D_3} \nabla_{D_4} D_2 -\nabla_{D_4} \nabla_{D_3} D_2 - \nabla_{[D_3 , D_4]} D_2 ) \ , $$ 
 which is an homogeneous tensor of weight $2$.
\end{itemize}

The following result, whose proof will be detailed during Section \ref{SectionMain}, describes all natural tensors associated to Fedosov structures:

\begin{thm} \label{MainThmFed} 
Let $X$ be a smooth manifold of dimension $2n$, and let $\F$ denote the sheaf of Fedosov structures. Let $\T$ be the sheaf of $p$-covariant tensors over $X$. Let $\delta \in \mathbb{Z}$.


Fixing a point $x_0 \in X$ and a chart $U\simeq \R2n$ around $x_0$ produces a $\mathbb{R}$-linear isomorphism

$$
\begin{CD}
\left\{
\begin{array}{c}
 \text{Natural morphisms of sheaves} \ \\
 \F \longrightarrow \T \ \\
 \text{homogeneous of weight }\delta \
\end{array} \right\} @=
\bigoplus \limits_{d_1, \ldots , d_r} \mathrm{Hom}_{\Sp}(S^{d_1}N_1 \otimes \ldots \otimes S^{d_r}N_r , T_{x_0} ) \ ,
\end{CD}
$$
where $\Sp=\Spp$ denotes the symplectic group, $T_{x_0}$ denotes the vector space of $p$-covariant tensors at $x_0$ and $d_1, \ldots , d_r$ run over the non-negative integer solutions of the equation
\[
2d_1 + \ldots + (r+1)d_r =p-\delta \ .
\] 
\end{thm}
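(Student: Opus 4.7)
The strategy is to reduce this functorial statement to one about finite-dimensional representations of $\Sp = \Spp$. After fixing $x_0$ and the chart $U \simeq \R2n$ as in the hypothesis, the regularity condition built into the notion of natural morphism combined with the Peetre--Slov\'ak theorem cited in the preamble guarantees that the value of a natural tensor $T\colon \F \to \T$ at $(\omega,\nabla)$ and at the point $x_0$ depends only on the $\infty$-jet of $(\omega,\nabla)$ at $x_0$. Naturality then shows that specifying $T$ is equivalent to specifying a smooth $\Diff$-equivariant map $\tilde{T}\colon \Fx \to T_{x_0}$, where $\Fx$ denotes the space of $\infty$-jets of Fedosov structures at $x_0$ and $\Diff$ the group of $\infty$-jets at $x_0$ of diffeomorphisms fixing $x_0$; the chart provides the identification $T_{x_0}X \simeq \R2n$ through which $\Sp$ acts on everything in sight.

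The main geometric input is a Reduction Theorem for $\infty$-jets of Fedosov structures, analogous to the one stated in the preamble for linear connections. After normalizing $\omega_{x_0}$ to the standard symplectic form through a Darboux frame (a choice that reduces the isotropy action of $\Diff$ to its stabilizer $\Sp$), the remaining data of an $\infty$-jet of Fedosov structure should be presented as the tuple $(R,\nabla R,\nabla^2 R,\ldots)_{x_0}$, with each $\nabla^{k-1}R$ lying in the finite-dimensional $\Sp$-subrepresentation $N_k$ of a tensor power of $\R2n$ cut out by the Bianchi identities and by the compatibility $\nabla\omega=0$. The finite-order version of this identification is precisely the Gelfand--Retakh--Shubin result recalled in the introduction; its promotion to $\infty$-jets, dispensing with any finite-order restriction, is carried out by treating $\Fx$ as an inverse limit of finite-order truncations within the category of ringed spaces, which is the guiding viewpoint of the paper.

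Once the reduction is in place, $\tilde{T}$ descends to a smooth $\Sp$-equivariant map $\prod_{k\geq 1} N_k \to T_{x_0}$, and the weight hypothesis can be read off from the scaling of components in a Darboux basis. Adapting such a basis to $\lambda^2\omega$ rescales it by $\lambda^{-1}$, so that the components of any element of $N_k$ pick up a factor $\lambda^{-(k+1)}$ (one $\lambda$ from the single upper index of $\nabla^{k-1}R$ and $k+2$ factors of $\lambda^{-1}$ from its lower indices), while the components of a $p$-covariant tensor of weight $\delta$ pick up a factor $\lambda^{\delta - p}$. The Homogeneous Function Theorem cited in the preamble then forces $\tilde{T}$ to be polynomial, with every contributing multidegree $(d_1,\ldots,d_r)$ in the factors $N_1,\ldots,N_r$ satisfying exactly $2d_1 + 3d_2 + \ldots + (r+1)d_r = p - \delta$. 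By polarization, polynomial $\Sp$-equivariant maps of fixed multidegree are in bijection with $\Sp$-equivariant linear maps out of the corresponding tensor product of symmetric powers, which yields the announced direct-sum decomposition.

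The main obstacle is the $\infty$-jet Reduction Theorem for Fedosov structures: producing a canonical $\Sp$-equivariant identification of the quotient of $\Fx$ by $\Diff$ with $\prod_{k\geq 1}N_k$ requires a formal symplectic analogue of normal coordinates (in the spirit of the Fedosov exponential map) together with careful handling of the jet-order filtration. It is here that the ringed-space language should do most of the work, allowing one to upgrade the Gelfand--Retakh--Shubin finite-order theorem to all orders at once rather than comparing successive truncations by hand.
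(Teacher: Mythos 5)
Your overall strategy coincides with the paper's: Peetre--Slov\'ak to pass to $\infty$-jets at $x_0$, a reduction of the jet space modulo the normal subgroup of $\Diff$ to a product of finite-dimensional $\Sp$-modules, and then the Homogeneous Function Theorem plus polarization to obtain the direct sum of $\mathrm{Hom}_{\Sp}$-spaces (the weight bookkeeping and the final step are essentially as in the paper). However, the step you yourself single out as ``the main obstacle'' --- the Reduction Theorem for $\infty$-jets of Fedosov structures --- is precisely the mathematical content of the paper's Section \ref{SectionInvariants}, and you leave it unproved. Worse, the version you propose does not match the statement you are asked to prove: you want to present the quotient by means of the curvature and its covariant derivatives $(R,\nabla R,\nabla^2R,\ldots)_{x_0}$ ``cut out by the Bianchi identities,'' whereas the spaces $N_m$ appearing in Theorem \ref{MainThmFed} are by definition the normal tensor spaces, i.e.\ the tensors with the symmetries of $\Gamma_{ijk,a_1\ldots a_m}$ in normal coordinates. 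These are what the paper's $\phi_m$ maps onto, and the proof of surjectivity and of the fibre statement is carried out by exhibiting explicit sections and comparing normal coordinate systems. Realizing the quotient via curvature jets instead would require (i) identifying exactly which identities (Bianchi plus the Ricci-type identities for commuting covariant derivatives) cut out the image at each order, and (ii) an $\Sp$-equivariant identification of that space with $N_m$ --- neither of which is addressed, and the first of which is genuinely delicate. Gelfand--Retakh--Shubin, which you invoke, gives a finite-order statement about natural tensors being functions of the curvature, not the orbit/fibre identification in the form needed here.

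A secondary gap: you apply Peetre--Slov\'ak directly to the Fedosov sheaf, but $\F$ is a proper subsheaf of the sections of $\Lambda^2\times\Cs$ cut out by $\nabla\omega=0$, not the full sheaf of sections of a natural bundle, so the theorem as stated does not apply verbatim. The paper circumvents this by first fixing the standard symplectic form $\eta$ (Proposition \ref{red2}), applying Peetre--Slov\'ak to the genuine fibre bundle $\Conw\to\R2n$ of $\eta$-compatible symplectic connections (Corollary \ref{red3}), and only then unfixing $\omega$ again; the transitivity of diffeomorphisms on symplectic forms (Darboux) is what makes this detour harmless. Your write-up should either reproduce this detour or justify a version of Peetre--Slov\'ak valid for the subsheaf $\F$.
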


The spaces $N_m$ are called spaces of normal tensors of symplectic connections, and they are vector spaces made of tensors which recover the symmetries of the functions $\Gamma_{ijk}:=\omega_{il} \Gamma^l_{jk}$\footnote{During this work, we will follow Einstein summation convention, unless the summation is explicitly stated.}, where $\Gamma^l_{jk}$ are the Christoffel symbols of a symplectic connection in normal coordinates at the point $x_0$. They will be rigorously defined during Section \ref{SectionInvariants}.

\bigskip

\section{The Peetre-Slov\'ak Theorem}\label{SectionPeetre}

Let us briefly introduce the category of ringed spaces: they generalise smooth manifolds in a way that allows us to consider infinite dimensional spaces or quotients of smooth manifolds by the actions of groups.

\begin{defin}
A  ringed space is a pair $(X, \mathcal{O}_X)$, where $X$ is a topological space and $\mathcal{O}_X$ is a sub-algebra of the sheaf of real-valued continuous functions on $X$.

A morphism of ringed spaces\footnote{By similarity with the category of smooth manifolds, we will often call morphisms of ringed spaces as {\it smooth morphisms}.} $\varphi \colon (X , \mathcal{O}_X) \to (Y , \mathcal{O}_Y)$ is a continuous map $\varphi \colon X \to Y $ such that composition with $\varphi$ induces a morphism of sheaves $ \varphi^* \colon \mathcal{O}_Y \to \varphi_* \mathcal{O}_X$, that is, for any open set $V \subset Y$ and any function $f \in \mathcal{O}_Y (V)$, the composition $f \circ \varphi $ lies in $\mathcal{O}_X ( \varphi^{-1} V)$.
\end{defin}

The two main properties of this category that we will make use of are the  existence of inverse limits and the existence of quotients by the action of a group. For example, if $F\rightarrow X$ is a fibre bundle over a smooth manifold $X$, then the space $J^\infty F$ of $\infty$-jets of sections of $F\rightarrow X$ is defined as the inverse limit of the sequence of $k$-jets fibre bundles:
$$ \ldots \to J^k F \to J^{k-1}F \to \ldots \to F \to X \ . $$
The spaces $J^k F$ are smooth manifolds, and thus they are ringed spaces, choosing as sheaf the sheaf of real-valued smooth functions. Therefore, the space $J^\infty F$ is canonically imbued with a structure of ringed space. This fact will become of great relevance in the Peetre-Slov\'ak theorem, where natural tensors will be related to morphisms of ringed spaces coming from an $\infty$-jet space.

Additionally, we will require the following corollary:
 
\begin{cor}\label{CorolarioCociente}
Let $G\,$ be a group acting on two ringed spaces $X$ and $Y$, and let $H\subseteq G$ be a subgroup that acts trivially on $Y$.

Then, the universal property of the quotient restricts to a bijection:
$$
\begin{CD}
\left\{
\begin{aligned}
& \hskip .1cm G \text{-equivariant morphisms }  \ \\
&  \text{ of ringed spaces  } X \to Y \, \ 
\end{aligned} \right\} @=
\left\{
\begin{aligned}
&\hskip .1cm G/H \text{-equivariant morphisms  }  \ \\
& \text{ of ringed spaces } X/H \longrightarrow Y \ 
\end{aligned} \right\} \ .
\end{CD}
$$
\end{cor}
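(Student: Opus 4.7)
The plan is to invoke the universal property of the quotient $\pi \colon X \to X/H$ in the category of ringed spaces, which is one of the two main properties of this category highlighted just before the corollary. The bijection will be given, in one direction, by factorising a $G$-equivariant morphism through $\pi$, and in the other by precomposition with $\pi$.

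More concretely, I would first observe that the hypothesis that $H$ acts trivially on $Y$ ensures that the $G$-action on $Y$ descends to a $G/H$-action, so that $G/H$-equivariance is a well-defined notion for morphisms landing in $Y$. Starting then from a $G$-equivariant morphism $f \colon X \to Y$, for any $h \in H$ and any $x \in X$ the chain of equalities
$$f(h \cdot x) \, = \, h \cdot f(x) \, = \, f(x)$$
shows that $f$ is $H$-invariant. By the universal property of the quotient ringed space, $f$ then factors uniquely as $f = \bar{f} \circ \pi$ for some morphism of ringed spaces $\bar{f} \colon X/H \to Y$, and a direct verification on orbits shows that the resulting $\bar{f}$ is indeed $G/H$-equivariant.

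In the converse direction, any $G/H$-equivariant morphism $\bar{f} \colon X/H \to Y$ yields a $G$-equivariant morphism $\bar{f} \circ \pi \colon X \to Y$, because the canonical projection $\pi$ is itself $G$-equivariant with respect to the induced $G/H$-action on $X/H$. The two assignments are mutually inverse by virtue of the uniqueness statement in the universal property.

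The main (and essentially only non-formal) point is the appeal to the universal property of quotients in the category of ringed spaces, since set-theoretically the argument is completely standard. Once this categorical tool is invoked, all remaining verifications, such as continuity and compatibility with the structure sheaves, are inherited from the ambient category and present no further obstacle.
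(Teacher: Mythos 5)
Your argument is correct and matches the paper's intent exactly: the paper gives no written proof for this corollary, presenting it as an immediate consequence of the universal property of quotients in the category of ringed spaces, which is precisely the property you invoke and spell out. The verification of $H$-invariance, the factorisation through $\pi$, and the mutual inversity via uniqueness are all the standard steps the authors left implicit.
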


\smallskip

Now,  let us define a sort of ``smoothness'' condition for morphisms of sheaves:

\begin{defin}\label{DefiRegular}
Let $\F$ and $\FF$ be (sub)sheaves of the sheaves of smooth sections of the fibre bundles $F\rightarrow X$ and $F' \rightarrow X$, and let $T$ be a smooth manifold. A morphism of sheaves $\phi: \F \rightarrow \FF$ is said to be regular if,  for any smooth family of sections $\{s_t:U \rightarrow F\}_{t\in T}$ such that $U\simeq \RR^n$ and $s_t \in \F(U)$ for all $t\in T$,  the family $\{\phi(s_t):U \rightarrow F'\}_{t\in T}$ is also smooth.
\end{defin}

The Peetre-Slov\'ak Theorem (\cite{KMSBOOK, NS_PEETRE}) assures that any natural morphism of sheaves is a natural differential operator:


\begin{thm}[Peetre-Slov\'{a}k]\label{PeetreSlovak}
Let $X$ be a smooth manifold. Let $F'\rightarrow X$ and $F''\rightarrow X$ be natural bundles over $X$, and let $\F'$ and $\FF'$ be their respective sheaves of smooth sections over $X$. 

The choice of a point $\,p\in X\,$ allows to define this bijection:
$$
\begin{CD}
\left\{
\begin{aligned}
& \, \text{Natural morphisms of sheaves} \ \\
&\hskip 1.7cm  \phi \colon \F' \longrightarrow \mathcal{F}'' \ 
\end{aligned} \right\} @=
\left\{
\begin{aligned}
&\hskip .1cm  \Diff \text{-equivariant smooth maps }  \ \\
&\hskip 1.8cm  J^\infty_p F' \longrightarrow F''_p \ 
\end{aligned} \right\} \ ,
\end{CD}
$$

\noindent where $\Diff$ stands for the group of germs of diffeomorphisms $\tau$ between open sets of $X$ such that $\tau (p) = p$.
\end{thm}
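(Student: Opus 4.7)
The plan is to construct the two maps of the purported bijection explicitly and then verify they are mutual inverses, concentrating the real technical work into the classical Peetre finite-order step.

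First I would build the left-to-right map. Given a natural morphism of sheaves $\phi \colon \F' \to \mathcal{F}''$, set
\[
\Phi \colon J^\infty_p F' \longrightarrow F''_p, \qquad \Phi(j^\infty_p s) := \phi(s)(p).
\]
The critical point is well-definedness: $\phi(s)(p)$ depends only on the germ of $s$ at $p$ because $\phi$ is a sheaf morphism, but one needs the refinement that it depends only on the $\infty$-jet. This is the content of the Peetre finite-order theorem, which must be established in this setting: using regularity and naturality, one shows by contradiction that $\phi$ is locally of finite order, i.e.\ it factors through some $J^k F'$ in a neighbourhood of every section. The contradiction is obtained by engineering a smooth family of sections $\{s_t\}_t$ that differ from a fixed section only on shrinking disjoint neighbourhoods of $p$ (produced by pulling bumps back along suitable diffeomorphisms), turning a hypothetical failure of finite order into non-smooth dependence of $\phi(s_t)$ on $t$. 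Once finite order is in place, smoothness of $\Phi$ as a morphism of ringed spaces follows from the universal property of the inverse limit $J^\infty_p F' = \varprojlim J^k F'$, and $\Diff$-equivariance is immediate from naturality of $\phi$ restricted to diffeomorphisms fixing $p$.

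Second I would construct the inverse map. Given $\Phi \colon J^\infty_p F' \to F''_p$ smooth and $\Diff$-equivariant, I define $\phi$ as follows. In any chart $U \simeq \mathbb{R}^n$, choose a smooth family of local diffeomorphisms $\tau_q$ with $\tau_q(p) = q$, for instance the affine translations in the chart, and set
\[
\phi(s)(q) := (\tau_q)_*\, \Phi\bigl(j^\infty_p\,(\tau_q^{-1})_* s\bigr).
\]
Independence of the auxiliary choice $\tau_q$ is precisely $\Diff$-equivariance of $\Phi$, since two choices differ by a germ fixing $p$; this independence is what allows local constructions in different charts to glue into a global morphism of sheaves $\F' \to \mathcal{F}''$. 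Regularity of $\phi$ follows from smoothness of $\Phi$ together with the smooth dependence of $j^\infty_p(\tau_q^{-1})_* s$ on $q$ and $s$, and naturality of $\phi$ under an arbitrary diffeomorphism between open subsets reduces, once more via the family $\tau_q$, to the $\Diff$-equivariance of $\Phi$.

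Checking that the two constructions are mutually inverse is bookkeeping: the round trip on $\phi$ returns the original morphism because $\tau_p$ may be chosen to be the identity and $\phi$ is determined by its pointwise values after transport; the round trip on $\Phi$ recovers $\Phi$ by evaluating the reconstructed $\phi$ at the distinguished point $p$ and using $\tau_p = \mathrm{Id}$. The main obstacle is unmistakably the Peetre finite-order step, proved in the Kol\'a\v{r}--Michor--Slov\'ak framework by a delicate bump-function construction and constituting the sole reason for imposing the regularity hypothesis in the definition of natural morphism; every other ingredient reduces either to the universal property of the inverse limit of jet bundles, or to a direct unwinding of the definitions of regularity and naturality.
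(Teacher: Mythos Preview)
The paper does not supply its own proof of this statement: the Peetre--Slov\'ak Theorem is simply quoted, with references to \cite{KMSBOOK} and \cite{NS_PEETRE}, as a foundational input to the rest of the argument. Your sketch is a faithful outline of the standard proof found in those references---in particular the finite-order step via a bump-family contradiction exploiting regularity, followed by the reconstruction of $\phi$ from $\Phi$ using a smooth family of point-moving diffeomorphisms---so there is nothing to compare against in the paper itself beyond noting that your approach is the one the cited sources take.
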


\section{Natural Operations on a Fedosov Structure}

Let $X$ be a smooth manifold of dimension $2n$. Let $\F_X$ and $\T_X$ be the sheaves of Fedosov structures and $p$-covariant tensors over $X$, respectively.




\begin{propo}\label{red1}
The choice of a chart $U\subseteq X$ gives a bijection:
$$
\begin{CD}
\left\{
\begin{aligned}
&\hskip .1cm   \text{Natural morphisms of sheaves}  \ \\
&\hskip 1.8cm   \F_X \longrightarrow \T_X \ 
\end{aligned} \right\} @=
\left\{
\begin{aligned}
&\hskip .1cm  \text{Natural morphisms of sheaves}  \ \\
&\hskip 1.8cm  \F_{\R2n}  \longrightarrow \T_{\R2n} \ 
\end{aligned} \right\} \ .
\end{CD}
$$ 
\end{propo}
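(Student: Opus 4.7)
The plan is to establish the bijection by invoking the Peetre-Slov\'ak theorem on both sides. Fix a point $x_0 \in U$, and write $p_0 \in \R2n$ for its image under the chart $\varphi \colon U \simeq \R2n$.

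The forward direction sends a natural morphism $\phi \colon \F_X \to \T_X$ to its restriction to $U$, which is well defined by the locality of $\F$ and $\T$, and is then transported through $\varphi$ to yield a morphism $\F_{\R2n} \to \T_{\R2n}$. This is natural because any diffeomorphism between open subsets of $\R2n$ corresponds, via $\varphi$, to a diffeomorphism between open subsets of $U \subseteq X$, and both $\phi$ and the chart identification respect the action of diffeomorphisms on sections.

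For the backward direction, I would use the Peetre-Slov\'ak theorem (Theorem~\ref{PeetreSlovak}). Applied on $\R2n$ at $p_0$, it identifies natural morphisms $\F_{\R2n} \to \T_{\R2n}$ with $\mathrm{Diff}_{p_0}$-equivariant smooth maps $J^\infty_{p_0} F \to T_{p_0}$, where $F$ denotes the natural bundle ambient to $\F$. Applied on $X$ at $x_0$, it yields the analogous bijection with $\mathrm{Diff}_{x_0}$-equivariant smooth maps $J^\infty_{x_0} F \to T_{x_0}$. The chart $\varphi$ induces canonical isomorphisms of jet spaces, tensor fibres and germ groups at the two points, and these identifications carry one set of equivariant maps onto the other, producing the backward map. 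That these constructions are mutually inverse is then a formal bookkeeping exercise, since both arrows are ultimately induced by the single chart $\varphi$.

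The main obstacle I anticipate is justifying the applicability of the Peetre-Slov\'ak theorem to the Fedosov subsheaf $\F$ rather than to the ambient sheaf of all smooth sections of a natural bundle. In our setting one must check that every $\infty$-jet at $p_0$ of a Fedosov structure is realised by a genuine Fedosov structure defined on a neighbourhood, so that an equivariant map on the jet space genuinely determines a morphism of the subsheaf $\F$, and that smooth families in the sense of Definition~\ref{DefiRegular} translate into smooth families of jets and back. Once this realisability of Fedosov $\infty$-jets is settled, the argument reduces to the formal chart-identification outlined above.
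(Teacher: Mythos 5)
Your forward direction coincides with the paper's, but your backward direction takes a genuinely different and substantially heavier route. The paper's proof never touches jet spaces: given a natural $f\colon \F_{\R2n}\to\T_{\R2n}$ and a section $s\in\F_X(V)$, it uses locality to reduce to the case where $V$ is a chart, picks a diffeomorphism $\tau\colon V\to U$, and sets $\phi_f(s)(x)=\tau_*^{-1}\bigl(f(\tau_*s)\bigr)(x)$; well-definedness is immediate from the naturalness of $f$ under self-diffeomorphisms of $\R2n$, and the whole argument is an elementary transport-by-charts computation. Your route instead applies the Peetre--Slov\'ak theorem on both $X$ and $\R2n$ and matches the two jet pictures through the chart. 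This would produce the bijection, but at the cost of importing exactly the machinery that this proposition is meant to precede in the paper's logical order, and of the obstacle you yourself flag: Theorem \ref{PeetreSlovak} as stated applies to sheaves of \emph{all} smooth sections of a natural bundle, not to the Fedosov subsheaf cut out by $\nabla\omega=0$. Extending it requires showing that every formal Fedosov $\infty$-jet is realised by a genuine germ of a Fedosov structure --- a fact the paper establishes only later, via the formal Poincar\'e lemma, in the remark following the proposition on unfixing the symplectic form --- and that the regularity condition of Definition \ref{DefiRegular} transfers between the two pictures. Since you leave that step open, your argument as written has a real gap, and it is an entirely avoidable one, because the direct transport argument needs nothing about jets. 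I would replace the Peetre--Slov\'ak detour by the elementary construction above; if you insist on your route, you must first prove the subsheaf version of Peetre--Slov\'ak (jet realisability plus regularity) before invoking it.
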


\begin{proof}
Let $U\subseteq X$ be any chart, so that $U\simeq \R2n$. We will prove that there exists a bijection:

$$
\begin{CD}
\left\{
\begin{aligned}
&\hskip .1cm   \text{Natural morphisms of sheaves}  \ \\
&\hskip 1.8cm  \phi: \F_X \longrightarrow \T_X \ 
\end{aligned} \right\} @=
\left\{
\begin{aligned}
&\hskip .1cm  \text{Natural morphisms of sheaves}  \ \\
&\hskip 1.8cm \phi_U : \F_{U}  \longrightarrow \T_{U} \ 
\end{aligned} \right\} \ ,
\end{CD}
$$ 

thus obtaining the desired result.

For any natural morphism of sheaves $f: \F_{U}  \rightarrow \T_{U}$, let us construct the corresponding natural morphism of sheaves $\phi_f: \F_X \rightarrow \T_X$: for any $s\in \F_X(V)$ and $x\in V$, we must define $\phi_f(s)(x)$. 

As $\phi_f(s)(x)=\phi_f(s_{|W})(x)$ for any $W\subseteq V$ containing $x$, we may suppose that $V$ is also a chart, thus obtaining a local isomorphism $\tau:V\rightarrow U$, and so we may define:

$$\phi_f(s)(x)=\tau_*^{-1}(f(\tau_*s))(x)$$

It is trivial to check that this morphism is well defined, natural, regular and the inverse of the map $\phi \rightarrow \phi_{|U}$. \qed

\end{proof}

\bigskip

Let $(x_1, y_1, \ldots, x_n, y_n)$ be global coordinates on $\R2n$, and set $\eta=\d x_1 \wedge \d y_1 + \ldots + \d x_n \wedge \d y_n$. Let $\Con_{\eta} \rightarrow \R2n$ be the fibre bundle of symplectic connections for the symplectic form $\eta$, which is an affine subbundle of $\Con \rightarrow \R2n$. Let $\Cw$ be the sheaf of smooth sections of $\Conw$.

\begin{propo}\label{red2}
With the previous notations, there exists a bijection:
$$
\begin{CD}
\left\{
\begin{aligned}
&\hskip .1cm   \text{Natural morphisms of sheaves}  \ \\
&\hskip 1.8cm  \F_{\R2n}  \longrightarrow \T_{\R2n} \ 
\end{aligned} \right\} @=
\left\{
\begin{aligned}
&\hskip .1cm   \Autw \text{-natural morphisms of sheaves}  \ \\
&\hskip 1.8cm  \Cw  \longrightarrow \T_{\R2n} \ 
\end{aligned} \right\} \ ,
\end{CD}
$$ 
where a natural morphism of sheaves $\phi: \Cw  \rightarrow \T$ is said to be $\Autw$-natural if it is regular and verifies the naturalness condition for any local diffeomorphism $\tau: U \rightarrow V$ between open sets of $\R2n$ such that $\tau \cdot (\eta_{|_U}) = \eta_{|_V}$.
\end{propo}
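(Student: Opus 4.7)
My plan is to construct the bijection via Darboux's theorem, using that any Fedosov structure is locally isomorphic to one whose symplectic form is the standard $\eta$.

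The easy direction is restriction: given a natural morphism $\phi \colon \F_{\R2n} \to \T_{\R2n}$, I obtain a morphism $\psi \colon \Cw \to \T_{\R2n}$ by setting $\psi(\nabla) := \phi(\eta, \nabla)$, since any $\nabla \in \Cw(U)$ defines a Fedosov structure $(\eta_{|U}, \nabla)$. The naturality of $\phi$ under all diffeomorphisms immediately implies $\Autw$-naturality of $\psi$ (just restrict the commuting square to those $\tau$ preserving $\eta$), and regularity is inherited from regularity of $\phi$ applied to families of the form $(\eta, \nabla_t)$.

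For the reverse direction, given an $\Autw$-natural $\psi \colon \Cw \to \T_{\R2n}$, I define $\phi \colon \F_{\R2n} \to \T_{\R2n}$ as follows. Take a Fedosov structure $(\omega, \nabla)\in \F(V)$ and a point $x \in V$. By Darboux's theorem, there is a neighbourhood $V' \subseteq V$ of $x$ and a diffeomorphism $\tau \colon V' \to V''\subseteq \R2n$ with $\tau_* \omega = \eta_{|V''}$. Then $\tau_* \nabla$ is a symplectic connection for $\eta$ on $V''$, and I set
\[
\phi(\omega, \nabla)_{|V'} := \tau_*^{-1}\bigl(\psi(\tau_* \nabla)\bigr) \ .
\]
The central point is independence from $\tau$: if $\tau_1, \tau_2$ are two such Darboux diffeomorphisms, then $\sigma := \tau_2 \circ \tau_1^{-1}$ satisfies $\sigma_* \eta = \eta$, so $\sigma \in \Autw$; applying the $\Autw$-equivariance of $\psi$ to $\tau_{1*}\nabla$ yields $\sigma_* \psi(\tau_{1*}\nabla) = \psi(\sigma_* \tau_{1*} \nabla) = \psi(\tau_{2*}\nabla)$, which gives the equality of the two local definitions after pulling back. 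Gluing the locally defined tensors over the whole of $V$ then produces a well-defined $\phi(\omega, \nabla)$.

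It remains to verify that $\phi$ is natural under arbitrary diffeomorphisms, that the two assignments are mutually inverse, and that regularity is preserved. Naturality under an arbitrary $\rho \colon U \to V$ reduces, using compatible Darboux charts on both sides, to the $\Autw$-naturality of $\psi$; and the fact that restriction followed by the Darboux construction returns the original $\phi$ (and vice versa) is straightforward once one notices that the identity itself is a valid Darboux chart when $\omega=\eta$. I expect the main technical obstacle to be the regularity condition: I must ensure that the Darboux normalisation can be performed smoothly in families. This is where the use of ringed spaces and Moser's trick enters: given a smooth family $\{(\omega_t, \nabla_t)\}_{t \in T}$ of Fedosov structures with $T$ a smooth parameter manifold, Moser's argument produces a smooth family $\{\tau_t\}$ of local Darboux diffeomorphisms, from which the regularity of $\{\phi(\omega_t, \nabla_t)\}$ follows from the regularity of $\psi$ applied to the smooth family $\{\tau_{t*}\nabla_t\}$.
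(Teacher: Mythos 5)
Your proposal is correct and follows essentially the same route as the paper: restriction $\psi(\nabla):=\phi(\eta,\nabla)$ in one direction, and Darboux normalisation combined with $\Autw$-equivariance (to get independence from the choice of Darboux chart) in the other. In fact you supply more detail than the paper's own proof, which asserts well-definedness without checking it and is silent on regularity, whereas you correctly identify that a parametric (Moser-type) Darboux argument is needed to propagate regularity through smooth families.
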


\begin{proof}
Given a natural morphism of sheaves $\phi: \F \rightarrow \T$, the corresponding morphism of sheaves $\hat{\phi}: \Cw  \rightarrow \T$ is given, at any open subset $U\subseteq \R2n$, by
$$\hat{\phi}_U (\nabla) := \phi_U (\eta, \nabla) \ ,$$
which is trivially an $\Autw$-natural morphism of sheaves.

Let us give the inverse map, that is, to define a natural morphism of sheaves $\tilde{\varphi}: \F \rightarrow \T$ from  an $\Autw$-natural morphism of sheaves $\varphi:\Cw \rightarrow \T_{\R2n}$. Let $(\omega, \nabla) \in \F_{\R2n}(U)$ and $x\in U$. There exists an open subset $V\subseteq U$ and a diffeomorphism $\tau: V \rightarrow V$ such that $x\in V$ and $\tau \cdot (\eta_{|_V})=\omega_{|_U}$. As the value at $x$ of $\tilde{\varphi}(\omega, \nabla)$ does not depend on the neighbourhood of $x$ chosen, we may assume that $V=U$. Then:
$$\tilde{\varphi}(\omega,\nabla)(x):=\tau \cdot \varphi (\tau^{-1} \cdot \nabla) (x) \ .$$ \qed
\end{proof}

\begin{cor}\label{red3}
The choice of a point $x_0\in \R2n$ produces a bijection:
$$
\begin{CD}
\left\{
\begin{aligned}
&\hskip .1cm   \Autw \text{-natural morphisms of sheaves}  \ \\
&\hskip 1.8cm  \Cw  \longrightarrow \T_{\R2n} \ 
\end{aligned} \right\} @=
\left\{
\begin{aligned}
&\hskip .1cm   \Autw_{x_0} \text{-equivariant smooth maps}  \ \\
&\hskip 1.8cm  J_{x_0}^\infty \Conw  \longrightarrow T_{x_0} \ 
\end{aligned} \right\} ,
\end{CD}
$$ 
where $\Autw_{x_0}$ denotes the group of germs of diffeomorphisms $\tau$ between open sets of $\R2n$ such that $\tau (p)=p$ and $\tau \cdot \eta = \eta$.
\end{cor}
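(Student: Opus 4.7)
The strategy is to recognise this corollary as a direct application of the Peetre-Slov\'ak Theorem (Theorem \ref{PeetreSlovak}) in the subcategory whose morphisms are the $\eta$-preserving local diffeomorphisms. Although $\Conw \to \R2n$ is not a natural bundle under the full diffeomorphism pseudogroup, the lift of any element of $\Autw$ preserves the symplectic form $\eta$ and therefore the subbundle $\Conw$ of $\eta$-symplectic connections; so $\Conw$ is a natural bundle in this restricted sense, and so is the tensor bundle. The main point which makes the proof of Peetre-Slov\'ak go through essentially verbatim is the transitivity of $\Autw$ on $\R2n$, and this is clear because $\eta$ has constant coefficients in the given global coordinates, so every translation belongs to $\Autw$.

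In the direction from sheaf morphisms to $\infty$-jet maps, I would assign to an $\Autw$-natural morphism $\phi \colon \Cw \to \T_{\R2n}$ the map
\[
T_\phi \colon J_{x_0}^\infty \Conw \longrightarrow T_{x_0}, \qquad T_\phi(j_{x_0}^\infty \nabla) := \phi(\nabla)(x_0) \ .
\]
The well-definedness of $T_\phi$, i.e.\ the fact that $\phi(\nabla)(x_0)$ depends only on the $\infty$-jet of $\nabla$ at $x_0$, is the essential content of Peetre's theorem and follows from the regularity of $\phi$; smoothness of $T_\phi$ follows similarly from regularity; and $\Autw_{x_0}$-equivariance is inherited from the $\Autw$-naturalness of $\phi$.

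For the inverse, given an $\Autw_{x_0}$-equivariant smooth map $T \colon J_{x_0}^\infty \Conw \to T_{x_0}$, I would reconstruct $\phi_T$ pointwise: for $\nabla \in \Cw(U)$ and $x \in U$, choose an $\eta$-preserving diffeomorphism $\tau$ sending $x$ to $x_0$ (the translation by $x_0-x$ works), and set
\[
\phi_T(\nabla)(x) := \tau_*^{-1}\bigl( T(j_{x_0}^\infty (\tau_* \nabla)) \bigr) \ .
\]
The $\Autw_{x_0}$-equivariance of $T$ guarantees independence from the choice of such $\tau$ (any two differ by an element of $\Autw_{x_0}$), and the two assignments $\phi \mapsto T_\phi$ and $T \mapsto \phi_T$ are seen to be mutually inverse by direct inspection.

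The main obstacle, and the only part requiring real care, is the Peetre reduction underlying the well-definedness of $T_\phi$: one must verify that the classical argument of \cite{KMSBOOK} carries over to the smaller pseudogroup $\Autw$. As noted above, the needed input is the transitive action of $\Autw$ on $\R2n$ by translations, which reduces the global naturalness of $\phi$ to $\Autw_{x_0}$-equivariance at the single fibre $J_{x_0}^\infty \Conw$, exactly as in the proof of Theorem \ref{PeetreSlovak}.
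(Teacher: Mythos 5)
Your proposal is correct and follows the same route as the paper, which simply invokes ``a simple variation of the Peetre-Slov\'ak theorem, substituting naturalness by $\Autw$-naturalness.'' Your additional observations --- that $\Conw$ is stable under lifts of $\eta$-preserving diffeomorphisms and that the translations in $\Autw$ provide the transitivity needed for the reduction to the fibre over $x_0$ --- are exactly the points that make that variation go through.
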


\begin{proof}
A simple variation of the Peetre-Slov\'ak theorem \ref{PeetreSlovak}, substituting naturalness by $\Autw$-naturalness, allows us to conclude. 
\qed
\end{proof}

However, even though fixing a symplectic form in a neighbourhood of a point allows us to use the Peetre-Slov\'ak Theorem -- reducing the computations to the $\infty$-jet space, the resulting space is difficult to reduce. It is convenient to take a step back, unfixing the symplectic form, in order to advance:

\begin{propo}
There exists a bijection:
$$
\begin{CD}
\left\{
\begin{array}{c}
   \Autw_{x_0} \text{-equivariant smooth maps}  \ \\
   \\
  J_{x_0}^\infty \Conw  \longrightarrow T_{x_0}
\end{array} \right\} @=
\left\{
\begin{array}{c}
   \Diff \text{-equivariant smooth maps}  \ \\
   \\
  J_{x_0}^\infty \F \longrightarrow T_{x_0}
\end{array} \right\} \ ,
\end{CD}
$$ 
where $J_{x_0}^\infty \F := \{(j_{x_0}^{\infty} \omega, j_{x_0}^\infty \nabla): (\omega,\nabla)\in \F_{x_0}\}$.
\end{propo}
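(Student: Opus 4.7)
The plan is to exhibit explicit inverse bijections by using a ``straightening'' of arbitrary symplectic $\infty$-jets to the standard form $\eta$. In one direction, given an $\Autw_{x_0}$-equivariant smooth map $\varphi \colon J^\infty_{x_0} \Conw \to \Tx$, I would extend it to $\tilde\varphi \colon J^\infty_{x_0} \F \to \Tx$ by the formula
$$\tilde\varphi(j^\infty_{x_0}\omega, j^\infty_{x_0}\nabla) \, := \, \tau_* \, \varphi(\tau^{-1}_* \, j^\infty_{x_0}\nabla) \ ,$$
where $\tau \in \Diff$ is any germ of diffeomorphism fixing $x_0$ whose $\infty$-jet satisfies $\tau_* j^\infty_{x_0}\eta = j^\infty_{x_0}\omega$. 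Conversely, given a $\Diff$-equivariant smooth map $\psi \colon J^\infty_{x_0} \F \to \Tx$, I would simply restrict it along the ``slice'' map $J^\infty_{x_0} \Conw \hookrightarrow J^\infty_{x_0} \F$, $j^\infty_{x_0}\nabla \mapsto (j^\infty_{x_0}\eta, j^\infty_{x_0}\nabla)$; the restriction is $\Autw_{x_0}$-equivariant since elements of $\Autw_{x_0}$ stabilise $j^\infty_{x_0}\eta$ by definition.

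The existence of the straightening $\tau$ is a formal Darboux theorem at $x_0$: for any $\infty$-jet of a symplectic form at $x_0$, there is a germ of diffeomorphism fixing $x_0$ carrying $\eta$ to it modulo infinite order. I would establish this either by Moser's trick applied at the $\infty$-jet level, or by an order-by-order induction exploiting the infinitesimal transitivity of the $\Diff$-action on the $\Diff$-orbit of $j^\infty_{x_0}\eta$. Well-definedness of $\tilde\varphi$ then follows because any two choices $\tau, \tau'$ differ by a factor from $\Autw_{x_0}$, so the $\Autw_{x_0}$-equivariance of $\varphi$ renders the value independent of the choice. Checking that the two constructions are mutually inverse is routine, as is the $\Diff$-equivariance of $\tilde\varphi$: given $\sigma \in \Diff$, the composition $\sigma \circ \tau$ serves as a valid straightening of $\sigma_* j^\infty_{x_0}\omega$, and the formulas collapse.

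The hard part will be the regularity of $\tilde\varphi$ in the ringed-space sense, since this requires that the straightening $\tau$ be chosen to depend smoothly on the $\infty$-jet of $\omega$. This amounts to producing a smooth local section of the orbit map $\Diff \to \Diff \cdot j^\infty_{x_0}\eta$ in the ringed-space category. I would obtain such a section via Moser's trick in families: setting $\omega_t := (1-t)\eta + t\omega$ and integrating the time-dependent vector field prescribed by Moser's equation, the time-$1$ flow yields a diffeomorphism $\tau$ depending smoothly on $j^\infty_{x_0}\omega$ to all orders at $x_0$, which will complete the argument.
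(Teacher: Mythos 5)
Your proposal is correct and takes essentially the same approach as the paper, whose proof simply declares the argument analogous to that of Proposition \ref{red2}: one direction restricts along the slice $j^\infty_{x_0}\nabla \mapsto (j^\infty_{x_0}\eta, j^\infty_{x_0}\nabla)$, and the other straightens the symplectic jet by a Darboux diffeomorphism and conjugates, with well-definedness following from the $\mathrm{Aut}(\eta)_{x_0}$-equivariance. Your extra care about the smooth (ringed-space) dependence of the straightening, via Moser's trick in families, fills in a detail the paper leaves implicit.
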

\begin{proof}
The proof of this result is similar to that of Proposition \ref{red2}.\qed
\end{proof}
Later on, only the value of the symplectic form at $x_0$ will be fixed, as the rest of the $\infty$-jet will be determined by the compatibility condition with the $\infty$-jet of a symplectic connection.

\begin{rem}
Observe that $J^\infty_{x_0} \F$ coincides with the set $$\{(j_{x_0}^{\infty} \omega, j_{x_0}^\infty \nabla)\in J_{x_0}^{\infty} \Lambda^2 \times J_{x_0}^\infty \Con^{\mathrm{sym}}: j_{x_0}^\infty (\nabla \omega)=0 \} \ .$$ The reasoning goes as follows: due to the formal version of the Poincaré Lemma, the $\infty$-jet of a non-singular 2-form $\omega$ such that $j_{x_0}^\infty (\nabla \omega)=0$ verifies that $j_{x_0}^\infty \omega = j_{x_0}^\infty (\dd \theta)$, for some 1-form $\theta$ defined on a neighbourhood of $x_0$. Therefore, $j_{x_0}^\infty \omega$ can be extended to a symplectic form at a neighbourhood of $x_0$ (considering, for example, $\dd \theta$). Then, a symplectic connection extending $j_{x_0}^\infty \nabla$ can be chosen, as symplectic connections compatible with a fixed symplectic form constitute a fibre bundle.
\end{rem}

\bigskip
\section{Invariants of Symplectic Connections}\label{SectionInvariants}

Let $x_0\in X$, let $( \omega, \nabla)$ be the germ of a Fedosov structure at $x_0$, and let $ \bar{\nabla}\,$ be the germ of the flat connection at $x_0 \in X$ corresponding, via the exponential map, to the flat connection of $T_{x_0} X$. Let $\TT:= C_2^1 (\omega \otimes \TT)$, where $C_i^j$ denotes the tensor contraction of the $i$-th covariant index with the $j$-th contravariant index.

\begin{defin}
For any integer $m\geq 0$, the $m$-th normal tensor of $\nabla$ at $x_0$ is $\bar{\nabla}_{x_0}^m \TT$.
\end{defin}

In a system of normal coordinates $\,(x_1,\ldots,x_n)\,$ around the point $x_0$ for $\nabla$, the tensor $\,\bar{\nabla}_{x_0}^m \TT$ is written as
\[
\bar{\nabla}_{x_0}^m \TT= \sum_{i,j,k,a_1,\ldots,a_m}  \Gamma_{ijk,a_1\ldots a_m} \cdot \mathrm{d}_{x_0} x_i \otimes  \mathrm{d}_{x_0} x_j \otimes  \mathrm{d}_{x_0} x_k \otimes  \mathrm{d}_{x_0} x_{a_1} \otimes \ldots \otimes  \mathrm{d}_{x_0} x_{a_m} \ ,
\]
where $\Gamma_{ijk,a_1\ldots a_m}^k:=\frac{\partial^m \Gamma_{ijk}}{\partial x_{a_1} \ldots \partial x_{a_m}} (x_0)$ and $\Gamma_{ijk}=\sum_{l=1}^{2n} \omega_{il} \Gamma_{jk}^l$.

\begin{rem}\label{LowerIndices}
 Notice that the sequences $\bar{\nabla}_{x_0}^1 \TT,\ldots, \bar{\nabla}_{x_0}^m \TT$ and $\bar{\nabla}_{x_0}^1 (\nabla-\bar{\nabla}),\ldots, \break \bar{\nabla}_{x_0}^m (\nabla-\bar{\nabla})$ mutually determine each other, as $\omega$ is non-singular. Following the notations above, the tensor $\bar{\nabla}_{x_0}^m (\nabla-\bar{\nabla})$ is written as usual:
\[
\bar{\nabla}_{x_0}^m \TT= \sum_{i,j,k,a_1,\ldots,a_m}  \Gamma_{ij,a_1\ldots a_m}^k \cdot \left( \frac{\partial}{\partial x_k} \right)_{x_0} \otimes  \mathrm{d}_{x_0} x_i \otimes  \mathrm{d}_{x_0} x_j \otimes  \mathrm{d}_{x_0} x_{a_1} \otimes \ldots \otimes  \mathrm{d}_{x_0} x_{a_m} \ ,
\]
where $\Gamma_{ij,a_1\ldots a_m}^k:=\frac{\partial^m \Gamma_{ij}^k}{\partial x_{a_1} \ldots \partial x_{a_m}} (x_0)$. 
\end{rem}


\begin{defin} 
The  space $N_{m}\,$ of normal tensors of order $m$ at $x_0\in X$ is the vector subspace of $(m+3)$-tensors whose elements $T$ verify the following symmetries:

\begin{enumerate}
\item they are symmetric in the second and third indices, and in  the last $m$:
$$T_{ikja_1\ldots a_m}=T_{ijka_1\ldots a_m}, \quad T_{ijka_{\sigma(1)}\ldots a_{\sigma(m)}}=T_{ijka_1\ldots a_m}, \quad \forall \sigma \in S_m \ ;$$
\item the symmetrization of the last $m+2$ covariant indices is zero:
$$\sum_{\sigma \in S_{m+2}} T_{i\sigma(j)\sigma(k)\sigma(a_1)\ldots\sigma(a_m)=0} \ ;$$
\item the following tensor is symmetric in $k$ and $a_1$:
$$T_{ikja_1\ldots a_m}-T_{jkia_1\ldots a_m} \ .$$
\end{enumerate}
\end{defin} 

Due to its symmetries, it is immediate that $N_0=0$.

Normal tensors belong in $N_m$, that is, $ \,\bar{\nabla}_{x_0}^m  \TT \, \in N_{ m}\,$, due to its expression in normal coordinates  (\cite{GELFAND}). As the tensor $\bar{\nabla}_{x_0}^m \TT$ depends only on the value of the $m$-jet $j_{x_0}^m \nabla$, the following map is well-defined:

\begin{align*}
\phi_m \colon J_{x_0}^m \F &\longrightarrow \ \ \Lambda_0 \times \prod\limits_{i=1}^m N_i  \\
(j^{r+1}_{x_0} \omega,j^r_{x_0} \nabla) \ & \xmapsto{\hspace*{0.6cm}} \ (\omega_{x_0}, \bar{\nabla}_{x_0}^1 \TT, \bar{\nabla}_{x_0}^2 \TT, \ldots,  \bar{\nabla}_{x_0}^m \TT\,) \, ,
\end{align*}

where $\Lambda_0$ denotes the open set of non-singular 2-forms at $x_0$. 

The maps $\phi_m$ are $\Diff$-equivariant and compatible, meaning that they commute with the restrictions $ J_{x_0}^m \F \rightarrow  J_{x_0}^{m-1} \F$ and $\Lambda_0 \times \prod\limits_{i=1}^m N_i \rightarrow \Lambda_0 \times \prod\limits_{i=1}^{m-1} N_i$. Therefore there exists a morphism of ringed spaces:

\begin{align*}
\phi_\infty \colon J_{x_0}^\infty \F &\longrightarrow \ \ \Lambda_0 \times \prod\limits_{i=1}^\infty N_i  \\
(j^\infty_{x_0} \omega, j^\infty_{x_0} \nabla) \ & \xmapsto{\hspace*{0.6cm}} \ (\omega_{x_0}, \bar{\nabla}_{x_0}^1 \TT, \bar{\nabla}_{x_0}^2 \TT, \ldots \,) \, .
\end{align*}
\medskip

\begin{reduc2} \label{ReductionTheorem2}
The $\diffmmasdos -$equivariant morphism of ringed spaces
\begin{align*}
\phi_m \colon J_{x_0}^m \F &\longrightarrow \ \ \Lambda_0 \times \prod\limits_{i=1}^m N_i  \\
(j^{m+1}_{x_0} \omega,j^m_{x_0} \nabla) \ & \xmapsto{\hspace*{0.6cm}} \ (\omega_{x_0}, \bar{\nabla}_{x_0}^1 \TT, \bar{\nabla}_{x_0}^2 \TT, \ldots,  \bar{\nabla}_{x_0}^m \TT\,) \, .
\end{align*}
is surjective, its fibres are the orbits of $\ndiffmmasdos$ and it admits smooth sections passing through any point of $J_{x_0}^m \F$.

As a consequence, $\phi_m$ induces a $\Gl$-equivariant isomorphism of ringed spaces:
$$ (J_{x_0}^m \F \ )/\ndiffmmasdos =\joinrel= N_1 \times \ldots \times N_m\, \, .$$
\end{reduc2}

\begin{proof}
Let us first prove that the fibres of $\phi_m$ are the orbits of $\ndiffmmasdos$. Let $\, (j^{m+1}_{x_0} \omega, j_{x_0}^m \nabla) \,$, $\,(j^{m+1}_{x_0} \omega', j_{x_0}^m \nabla')\,$ be two points in the orbit of $\ndiffmmasdos$, that is, $\,(j^{m+1}_{x_0} \omega', j_{x_0}^m \nabla')= j^{m+2}_{x_0} \tau \cdot (j^{m+1}_{x_0} \omega, j_{x_0}^m \nabla)\,$ for some $j^{m+2}_{x_0} \tau \in \ndiffmmasdos$.  As $\ndiffmmasdos$ acts by the identity on $\Lambda_0 \times \prod\limits_{i=1}^m N_i$,

$$\phi_m (j^{m+2}_{x_0} \tau \cdot (j^{m+1}_{x_0} \omega, j_{x_0}^m \nabla)\,) = j^{m+2}_{x_0} \tau \cdot \phi_m ((j^{m+1}_{x_0} \omega, j_{x_0}^m \nabla)\,) = \phi_m ((j^{m+1}_{x_0} \omega, j_{x_0}^m \nabla)\,) \ .$$

Let now $\, (j^{m+1}_{x_0} \omega, j_{x_0}^m \nabla) \,$, $\,(j^{m+1}_{x_0} \omega', j_{x_0}^m \nabla')\, \in J_{x_0}^m \F$ be two points in the same fibre of  $\phi_m$, that is, $\phi_m ((j^{m+1}_{x_0} \omega, j_{x_0}^m \nabla))=\phi_m ((j^{m+1}_{x_0} \omega', j_{x_0}^m \nabla'))= (T_1,\ldots,T_r) $. Let us fix a base of $T_{x_0} X$, let $x_1, \ldots, x_{2n}$ and $x_1',\ldots,x_{2n}'$ be the systems of normal coordinates induced by the fixed base for $j_{x_0}^m \nabla$ and $j_{x_0}^m \nabla'$, respectively, and let $\tau$ be the diffeomorphism that verifies $\tau \cdot x_i=x_i'$ for all $i\in \{1,\ldots,2n\}$. As $\dd_{x_0} x_i = \dd_{x_0} x_i'$ for all $i\in \{1,\ldots,2n\}$, it holds that $j_{x_0}^{m+2} \tau \in \ndiffmmasdos$. 

Let us write 
$$j^m_{x_0} \nabla = (\,0\,, \Gamma_{ij,a_1}^k, \ldots, \Gamma_{ij,a_1\ldots a_m}^k), \quad j^{m+1}_{x_0} \omega = (\omega_{ij}, \omega_{ij,k}, \ldots, \omega_{ij,ka_1\ldots a_m})$$
in the coordinates induced by $x_1, \ldots, x_{2n}$ on $J_{x_0}^m \F$. Similarly, in the coordinates induced by $x_1', \ldots, x_{2n}'$ on $J_{x_0}^m \F$, we write 
$$j^m_{x_0} \nabla' = (\,0\, , (\Gamma')_{ij,a_1}^k, \ldots, (\Gamma')_{ij,a_1\ldots a_m}^k),$$ 
$$j^{m+1}_{x_0} \omega' = \break (\omega'_{ij}, \omega'_{ij,k}, \ldots, \omega'_{ij,ka_1\ldots a_m}),$$  $$j_{x_0}^{m}(\tau \cdot \nabla) = (\,0\,, (\tau \cdot \Gamma)_{ij,a_1}^k, \ldots, (\tau \cdot \Gamma)_{ij,a_1\ldots a_m}^k),$$ $$j^{m+1}_{x_0} (\tau \cdot \omega) = ((\tau \cdot \omega)_{ij}, (\tau \cdot \omega)_{ij,k}, \ldots, (\tau \cdot \omega)_{ij,ka_1\ldots a_m}).$$

For all $r\in \{1,\ldots,m\}$, using that $j_{x_0}^{m+2}\tau \in \ndiffmmasdos$ we obtain the following equalities:
\begin{align*}
&\sum_{i,j,k,a_1,\ldots,a_r}  (\Gamma')_{ijk,a_1\ldots a_r}  \mathrm{d}_{x_0} x_i' \otimes  \mathrm{d}_{x_0} x_j' \otimes  \mathrm{d}_{x_0} x_k' \otimes  \mathrm{d}_{x_0} x_{a_1}' \otimes \ldots \otimes  \mathrm{d}_{x_0} x_{a_r}' =T_r  \\
 &= \tau \cdot T_r = \sum_{i,j,k,a_1,\ldots,a_r}  (\tau \cdot \Gamma)_{ijk,a_1\ldots a_r}  \mathrm{d}_{x_0} x_i' \otimes  \mathrm{d}_{x_0} x_j' \otimes \mathrm{d}_{x_0} x_k' \otimes  \mathrm{d}_{x_0} x_{a_1}' \otimes \ldots \otimes  \mathrm{d}_{x_0} x_{a_r}' \ ,
\end{align*}
and so $(\tau \cdot \Gamma)_{ijk,a_1\ldots a_r}= (\Gamma')_{ijk,a_1\ldots a_r}$ for all $r\in \{1,\ldots,m\}$. 

Thus, by Remark \ref{LowerIndices}, it is now enough to check that $j_{x_0}^m(\tau \cdot \omega )=j_{x_0}^m(\omega')$:

$$\sum_{i<j} (\tau \cdot \omega)_{ij} \dd_{x_0} x_i' \wedge \dd_{x_0} x_j' = (\tau \cdot \omega )_{x_0} =  \omega_{x_0} = \omega'_{x_0} = \sum_{i<j} \omega_{ij} \dd_{x_0} x_i' \wedge \dd_{x_0} x_j'   \ ,$$
$$
(\tau \cdot \omega)_{ij,k}= (\tau \cdot \Gamma)_{ikj} - (\tau \cdot \Gamma)_{jki} = 0 =  (\Gamma')_{ikj} - (\Gamma')_{jki} = \omega'_{ij,k} \ ,
$$
$$\vdots$$
$$
(\tau \cdot \omega)_{ij,ka_1\ldots a_m}= (\tau \cdot \Gamma)_{ikj,a_1\ldots a_m} - (\tau \cdot \Gamma)_{jki,a_1\ldots a_m} =  (\Gamma')_{ikj,a_1\ldots a_m} - (\Gamma')_{jki,a_1\ldots a_m} = \omega'_{ij,ka_1\ldots a_m} \ .
$$

Lastly, let us prove the statement about the existence of smooth sections 
$$s:\Lambda_0 \times N_1 \times \ldots \times N_m\,.$$
Let us fix a system of coordinates $x_1,\ldots,x_{2n}$ at $x_0$, and let \break $(B_{ij},A_{ijka_1}, \ldots, A_{ijka_1\ldots a_m}) \in \Lambda_0 \times N_1 \times \ldots \times N_m$. 

The jet $s((B_{ij},A_{ijka_1}, \ldots, A_{ijka_1\ldots a_m}))=(j_{x_0}^{m+1} \omega, j_{x_0}^m \nabla)$ is defined, in the coordinates induced by the fixed system in $J_{x_0}^m \F$, as follows:
$$\Gamma_{ijk}=0, \Gamma_{ijk,a_1}=A_{ijka_1}, \ldots, \Gamma_{ijk,a_1\ldots a_m}=A_{ijka_1\ldots a_m} \ ,$$
$$\omega_{ij}=B_{ij}, \omega_{ij,k}=0, \omega_{ij,ka_1}=A_{ikja_1} - A_{jkia_1}, \ldots, \omega_{ij,ka_1\ldots a_m}=A_{ikja_1\ldots a_m} -A_{jkia_1\ldots a_m} \ , $$
and so the jet $j_{x_0}^m \nabla= (\,0\,, \Gamma_{ij,a_1}^k, \ldots, \Gamma_{ij,a_1\ldots a_m}^k)$ is defined. The symmetries of the spaces $N_i$ assure that $(j_{x_0}^{m+1} \omega, j_{x_0}^m \nabla)\in \Fedm = J_{x_0}^m \F$ and that $x_1,\ldots,x_{2n}$ is a system of normal coordinates at $x_0$ for $j_{x_0}^m \nabla$. 

\qed
\end{proof}

\begin{cor}\label{CoroRedThm} 
The $\diffinfty -$equivariant morphism of ringed spaces
\begin{align*}
\phi_\infty \colon J_{x_0}^\infty \F  &\longrightarrow \ \ \Lambda_0 \times \prod\limits_{i=1}^\infty N_i\,  \\
(j^\infty_{x_0} \omega, j^\infty_{x_0} \nabla) \ & \xmapsto{\hspace*{0.6cm}} \ ( \omega_{x_0}, \bar{\nabla}_{x_0}^1 \TT, \bar{\nabla}_{x_0}^2 \TT, \ldots \,) ,
\end{align*}
induces a $\Gl$-equivariant isomorphism of ringed spaces:
$$ (J_{x_0}^\infty \F )/\ndiffinfty =\joinrel=\Lambda_0 \times \prod\limits_{i=1}^\infty N_i\, \, .$$
\end{cor}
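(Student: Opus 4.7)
The plan is to deduce this statement from the finite Reduction Theorem by passing to inverse limits in the category of ringed spaces. Recall that $J^\infty_{x_0}\F$ is defined as $\varprojlim_m J^m_{x_0}\F$, that $\ndiffinfty$ can be written as $\varprojlim_m \ndiffmmasdos$ as a group acting on the tower, and that $\Lambda_0 \times \prod_{i=1}^\infty N_i$ equals $\varprojlim_m (\Lambda_0 \times \prod_{i=1}^m N_i)$. The maps $\phi_m$ given by the Reduction Theorem are compatible with the truncation morphisms of these towers, so they determine $\phi_\infty$ as the induced morphism of ringed spaces on inverse limits.

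First I would establish the surjectivity of $\phi_\infty$ at the level of ringed spaces by producing a smooth section. At each finite level $m$, the explicit formulas in the proof of the Reduction Theorem give a smooth section $s_m$; inspecting those formulas shows that $s_{m+1}$ restricts to $s_m$ under truncation (all coefficients of lower-order Christoffel symbols and of $\omega$ are determined by the same closed expressions in the normal tensors $A_{ijka_1\ldots a_r}$). Hence the $s_m$ glue into a smooth section $s_\infty$ of $\phi_\infty$ via the universal property of inverse limits of ringed spaces.

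Next I would verify that the fibres of $\phi_\infty$ coincide with the orbits of $\ndiffinfty$. Since every element of $\ndiffinfty$ acts as the identity on $T_{x_0}X$, it acts trivially on $\Lambda_0 \times \prod_i N_i$, so orbits are contained in fibres. For the converse, given two $\infty$-jets with the same image under $\phi_\infty$, fix a basis of $T_{x_0}X$ and, at each finite level $m$, let $\tau_m$ be the diffeomorphism produced in the proof of the Reduction Theorem, which sends the normal coordinates of the first Fedosov structure to those of the second up to order $m+2$. By construction this $\tau_m$ is uniquely determined, hence the family $\{\tau_m\}$ is compatible under truncation and assembles into some $\tau_\infty \in \ndiffinfty$ that witnesses the equivalence of the two $\infty$-jets.

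Combining the morphism $\phi_\infty$, the description of its fibres, and the existence of the smooth section $s_\infty$, an application of Corollary \ref{CorolarioCociente} with $G = \diffinfty$ and $H = \ndiffinfty$ yields the desired $\Gl$-equivariant isomorphism of ringed spaces $(J^\infty_{x_0}\F)/\ndiffinfty \xrightarrow{\sim} \Lambda_0 \times \prod_{i=1}^\infty N_i$. The step I expect to demand the most care is checking that the quotient $(J^\infty_{x_0}\F)/\ndiffinfty$ really behaves well as a ringed space rather than merely as a set with a group action; the coherent section $s_\infty$ is precisely what controls this issue, since it furnishes a smooth inverse to $\phi_\infty$ modulo the group action and thereby avoids any pathology that could otherwise arise when forming inverse limits of quotient ringed spaces.
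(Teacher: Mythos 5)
Your proposal is correct and is essentially the argument the paper intends: the corollary is stated without proof precisely because it is the inverse limit of the finite-level Reduction Theorem, and your write-up supplies the two compatibility checks (the explicit sections $s_m$ truncate coherently, and the normal-coordinate diffeomorphisms $\tau_m$ are canonical once a basis of $T_{x_0}X$ is fixed, hence assemble into an element of $\ndiffinfty$) that make the passage to the limit legitimate. The only cosmetic remark is that Corollary \ref{CorolarioCociente} governs factoring equivariant maps through quotients rather than producing the isomorphism itself; what actually yields the isomorphism of ringed spaces is, as in the finite case, the combination of surjectivity, fibres being orbits, and the smooth section $s_\infty$.
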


\begin{cor}
The choice of a non-singular 2-form $\eta_{x_0}$ at $x_0$ produces a bijection:
$$
\begin{CD}
\left\{
\begin{array}{c}
 \Gl \text{-equivariant smooth maps}  \ \\
 \Lambda_0 \times \prod\limits_{i=1}^\infty N_i\,  \longrightarrow T_{x_0} \  
\end{array} \right\} @=
\left\{
\begin{array}{c}
  \Spp \text{-equivariant smooth maps}  \ \\
  \prod\limits_{i=1}^\infty N_i  \longrightarrow T_{x_0}  \ 
\end{array} \right\} \ ,
\end{CD}
$$
where $\Spp :=  \{  \mathrm{d}_{x_0} \tau \colon \tau \in \FDiff \}$.
\end{cor}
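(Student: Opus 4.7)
The plan is to identify $\Lambda_0$ with the homogeneous space $\Gl/\Sp$ and then apply the standard reduction of equivariance by a stabilizer. As a first step, I would observe that $\Gl = \Gl(T_{x_0}X)$ acts transitively on $\Lambda_0$, since any two non-singular skew-symmetric bilinear forms on a $2n$-dimensional real vector space are linearly equivalent, and that the isotropy at the chosen $\eta_{x_0}$ is by definition $\Spp$. Consequently the orbit map $g \mapsto g\cdot\eta_{x_0}$ exhibits $\Gl \to \Lambda_0$ as a principal $\Sp$-bundle, which in particular admits smooth local sections.

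For the direction from left to right, given a $\Gl$-equivariant smooth map $\Phi\colon \Lambda_0 \times \prod\limits_{i=1}^\infty N_i \to T_{x_0}$, I would set
\[
\phi(T_1, T_2, \ldots) := \Phi(\eta_{x_0}, T_1, T_2, \ldots).
\]
For any $g \in \Sp$ we have $g\cdot \eta_{x_0} = \eta_{x_0}$, hence the $\Gl$-equivariance of $\Phi$ yields
\[
g\cdot \phi(T_i) \;=\; g\cdot \Phi(\eta_{x_0}, T_i) \;=\; \Phi(g\cdot\eta_{x_0}, g\cdot T_i) \;=\; \phi(g\cdot T_i),
\]
so $\phi$ is $\Sp$-equivariant and smooth.

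For the inverse direction, starting from a $\Sp$-equivariant smooth map $\phi\colon \prod\limits_{i=1}^\infty N_i \to T_{x_0}$, I would define
\[
\Phi(\omega, T_i) \;:=\; g\cdot \phi(g^{-1}\cdot T_i), \qquad \text{for any } g\in\Gl \text{ with } g\cdot\eta_{x_0}=\omega.
\]
Independence of the choice of $g$ is immediate from $\Sp$-equivariance: if $g'$ is another such choice then $h:=g^{-1}g' \in \Sp$, and
\[
g'\cdot\phi((g')^{-1}T_i) \;=\; gh\cdot\phi(h^{-1}g^{-1}T_i) \;=\; g\cdot\phi(g^{-1}T_i).
\]
Smoothness of $\Phi$ is obtained by rewriting it locally as $\omega \mapsto s(\omega)\cdot\phi(s(\omega)^{-1}\cdot T_i)$ for a smooth local section $s$ of the principal bundle $\Gl\to\Lambda_0$, and $\Gl$-equivariance is a direct verification. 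The two constructions are manifestly mutually inverse.

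The main (though mild) technical point is the smoothness of the inverse assignment, which relies on the existence of smooth local sections of $\Gl \to \Lambda_0$. Once this is in hand, the whole argument may be regarded as an instance of Corollary \ref{CorolarioCociente}, combined with the identification $\Lambda_0 \times \prod\limits_{i=1}^\infty N_i \simeq \Gl \times_{\Sp} \prod\limits_{i=1}^\infty N_i$ of $\Gl$-spaces coming from Step 1.
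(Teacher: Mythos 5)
Your argument is correct and is essentially the paper's own proof: the paper only remarks that the proof is ``similar to Proposition~\ref{red2}'', and that proposition's proof is precisely the restrict/extend-by-transitivity argument you spell out, with the linear transitivity of $\Gl$ on $\Lambda_0$ playing the role that Darboux's theorem plays there and well-definedness of the extension following from $\Sp$-equivariance exactly as you check. The only small quibble is that your closing appeal to Corollary~\ref{CorolarioCociente} is not quite apposite---that corollary concerns quotienting by a subgroup that acts trivially on the target, whereas here $\Sp$ is a point stabilizer and the relevant abstract principle is the associated-bundle (induction/restriction) adjunction---but your self-contained argument does not rely on it.
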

\begin{proof}
The proof of this result is similar to Proposition \ref{red2}.
\end{proof}

\bigskip


\section{Proof of Theorem \ref{MainThmFed}}\label{SectionMain}

\begin{defin}
Let $\delta\in \RR$. We say that a natural tensor $T:\F \rightarrow \T$ is homogeneous of weight $\delta$ if, for all non-zero $\lambda\in \RR$, it holds that\footnote{Observe that if $(\omega,\nabla)$ is a Fedosov structure, then $(\lambda \omega , \nabla)$ is also a Fedosov structure for any $\lambda \in \RR \setminus
 \{0\}$.}:
$$T(\lambda^2 \omega, \nabla)=\lambda^\delta T(\omega, \nabla) \ .
$$
\end{defin}


Observe that, if $T\neq 0$ and $\delta\in \mathbb{Z}$, the weight must be an even number: if $T$ is an homogeneous natural tensor of odd weight $\delta$, then the homogeneity condition for $\lambda=-1$ says:
$$T(\omega, \nabla)=T((-1)^2 \omega, \nabla)=(-1)^\delta T(\omega, \nabla)=-T(\omega, \nabla) \ ,$$
obtaining that $T=0$.


\medskip
\noindent {\bf Theorem \ref{MainThmFed}.} {\it
Let $X$ be a smooth manifold of dimension $2n$, and let $\F$ denote the sheaf of Fedosov structures. Let $\T$ be the sheaf of $p$-covariant tensors over $X$. Let $\delta \in \mathbb{Z}$.


Fixing a point $x_0 \in X$ and a chart $U\simeq \R2n$ around $x_0$ produces a $\mathbb{R}$-linear isomorphism

$$
\begin{CD}
\left\{
\begin{array}{c}
 \text{Natural morphisms of sheaves} \ \\
 \F \longrightarrow \T \ \\
 \text{homogeneous of weight }\delta \
\end{array} \right\} @=
\bigoplus \limits_{d_1, \ldots , d_r} \mathrm{Hom}_{\Sp}(S^{d_1}N_1 \otimes \ldots \otimes S^{d_r}N_r , T_{x_0} ) \ ,
\end{CD}
$$
where $\Sp=\Spp$ denotes the symplectic group, $T_{x_0}$ denotes the vector space of $p$-covariant tensors at $x_0$ and $d_1, \ldots , d_r$ run over the non-negative integer solutions of the equation }
\begin{equation}\label{eqMainFed}
2d_1 + \ldots + (r+1)d_r =p-\delta \ .
\end{equation}

\medskip



%

\begin{proof} 
Let us fix a point $x_0\in X$. Choose a chart $U\simeq \R2n$ around $x_0$, so that Proposition \ref{red1} produces a bijection:
$$
\begin{CD}
\left\{
\begin{array}{c}
   \text{Natural morphisms of sheaves}  \ \\
   \F \longrightarrow \T \ \\
\text{homogeneous of weight }\delta \
\end{array} \right\} @=
\left\{
\begin{array}{c}
    \text{Natural morphisms of sheaves}  \ \\
   \F_{\R2n} \longrightarrow \T_{\R2n} \ \\
 \text{homogeneous of weight }\delta \
\end{array} \right\} ,
\end{CD}
$$ 
where $\F_{\R2n}$ and $\T_{\R2n}$ denote the sheaves $\F$ and $\T$ restricted to $U$ and passed through the diffeomorphism $U\simeq \R2n$. 

Fixing the canonical symplectic form $\eta$ on $\R2n$ lets us invoke Proposition \ref{red2} and Proposition \ref{red3}, which gives the bijection:
$$
\begin{CD}
\left\{
\begin{array}{c}
    \text{Natural morphisms of sheaves}  \ \\
   \F_{\R2n} \longrightarrow \T_{\R2n} \ \\
 \text{homogeneous of weight }\delta \
\end{array} \right\} @=
\left\{
\begin{array}{c}
   \Autw_{x_0} \text{-equivariant smooth maps}  \ \\
  J_{x_0}^\infty \Conw  \longrightarrow T_{x_0} \ \\
 \text{homogeneous of weight }\delta \
\end{array} \right\} ,
\end{CD}
$$ 

where an $\FDiff$-equivariant smooth map $T:J_{x_0}^\infty\Conwpinf  \rightarrow T_{x_0}$ being homogeneous of weight $\delta$ means that it verifies the following property:
$$
T( h_\lambda  \cdot (j_{x_0}^\infty \nabla))=\lambda^{p-\delta}T(j_{x_0}^\infty \nabla) \ ,
$$
for any homothety\footnote{We say that $\tau \in \Diff$ is a homothety of ratio $\lambda \neq 0$ if $ \mathrm{d}_{x_0}\tau=\lambda \cdot \mathrm{Id}$.} $h_\lambda$ of ratio $\lambda\neq 0$. 

Let us now unfix the symplectic form (recall that diffeomorphisms act transitively on symplectic forms due to the existence of Darboux coordinates):

$$
\begin{CD}
\left\{
\begin{array}{c}
   \Autw_{x_0} \text{-equivariant smooth maps}  \ \\
  J_{x_0}^\infty \Conw  \longrightarrow T_{x_0} \ \\
 \text{homogeneous of weight }\delta \
\end{array} \right\} @=
\left\{
\begin{array}{c}
   \Diff \text{-equivariant smooth maps}  \ \\
  J_{x_0}^\infty \F \longrightarrow T_{x_0} \ \\
 \text{homogeneous of weight }\delta \
\end{array} \right\} .
\end{CD}
$$ 

As the action of both $\Diff$ and $\diffinfty$ coincide over $J^\infty_{x_0} \F$ and $T_{x_0}$, we may consider $\diffinfty$-equivariant maps instead in the set above. 

For the next step, recall that the following sequence of groups is exact:
\[
1 \longrightarrow \ndiffinfty \longrightarrow \diffinfty \longrightarrow \Gl \longrightarrow 1
\]

As the subgroup $\ndiffinfty$ acts by the identity over $T_{x_0}$, Corollary \ref{CorolarioCociente} in conjunction with the exact sequence above assures the existence of an isomorphism:

$$
\begin{CD}
\left\{
\begin{array}{c}
 \diffinfty \text{-equivariant smooth maps}  \ \\
  J_{x_0}^\infty \F  \longrightarrow T_{x_0} \ \\
 \text{homogeneous of weight }\delta \ 
\end{array} \right\} @=
\left\{
\begin{array}{c}
 \Gl \text{-equivariant smooth maps}  \ \\
  J_{x_0}^\infty \F / \ndiffinfty  \longrightarrow T_{x_0} \ \\
\text{homogeneous of weight }\delta \ 
\end{array} \right\}  
\end{CD} \ .
$$ 
\medskip

Now, Corollary \ref{CoroRedThm} allows us to replace this quotient ringed space via the bijection:

$$
\begin{CD}
\left\{
\begin{array}{c}
 \Gl \text{-equivariant smooth maps}  \ \\
  J_{x_0}^\infty \F / \ndiffinfty  \longrightarrow T_{x_0} \ \\
 \text{homogeneous of weight }\delta \ 
\end{array} \right\} @=
\left\{
\begin{array}{c}
  \Gl \text{-equivariant smooth maps}  \ \\
  \Lambda_0 \times \prod\limits_{i=1}^\infty N_i  \longrightarrow T_{x_0} \ \\
 \text{homogeneous of weight }\delta \ 
\end{array} \right\} \ .
\end{CD}
$$ 

Fixing the non-singular 2-form $\eta_{x_0}$ at $x_0$ allows us to remove the space $\Lambda_0$, due to the bijection:

$$
\begin{CD}
\left\{
\begin{array}{c}
  \Gl \text{-equivariant smooth maps}  \ \\
  \Lambda_0 \times \prod\limits_{i=1}^\infty N_i  \longrightarrow T_{x_0} \ \\
 \text{homogeneous of weight }\delta \ 
\end{array} \right\} @=
\left\{
\begin{array}{c}
  \Spp \text{-equivariant smooth maps}  \ \\
  \prod\limits_{i=1}^\infty N_i  \longrightarrow T_{x_0} \ \\
 \text{homogeneous of weight }\delta \ 
\end{array} \right\} \ ,
\end{CD}
$$ 
where, following the previous bijections, a $\Spp$-equivariant smooth map $T:\prod\limits_{i=1}^\infty N_i  \rightarrow T_{x_0}$ is said to be homogeneous of weight $\delta$ if, for any $\lambda \neq 0$, it holds that 

$$T(\lambda^2 T_1, \lambda^3 T_2, \ldots)=\lambda^{p-\delta} T(T_1,T_2, \ldots) \ .$$

Therefore, the homogeneity allows us to make the final reduction by applying the Homogeneous Function Theorem below, producing the isomorphism:

$$
\begin{CD}
\left\{
\begin{array}{c}
  \Spp \text{-equivariant smooth maps}  \ \\
   \prod\limits_{i=1}^\infty N_i
     \longrightarrow T_{x_0} \ \\
 \text{homogeneous of weight }\delta \ 
\end{array} \right\} @=
\bigoplus \limits_{d_1, \ldots , d_r} \mathrm{Hom}_{\Spp}(S^{d_1}N_1 \otimes \ldots \otimes S^{d_r}N_r , T_{x_0} ) \ ,
\end{CD}
$$
where $d_1, \ldots , d_r$ are non-negative integers running over the solutions of the equation
\[
2d_1 + \ldots + (r+1)d_r =p-\delta \ .
\]

\qed

\end{proof}

\begin{funchom} \label{FuncionesHomogeneas} {\it Let $\{ E_i \}_{i\in \mathbb{N}}$ be finite dimensional vector spaces.


Let $\, f \, \colon  \prod_{i=1}^\infty E_i \to \mathbb{R}$ be a smooth function such that there exist positive real numbers $a_i > 0$, and $ w \in \mathbb{R}$ satisfying:
\begin{equation}\label{CondicionHomogeneidadLemma}
 f ( \lambda^{a_1} e_1 , \ldots , \lambda^{a_i} e_i , \ldots ) = \lambda^w \, f(e_1 , \ldots , e_i , \ldots )
\end{equation}
 for any positive real number $\lambda >0$ and any
$(e_1 , \ldots , e_i , \ldots ) \in \prod_{i=1}^\infty E_i$.

Then, $f$ depends on a finite number of variables $e_1,\dots, e_r$ and it is a sum of monomials of 
degree $d_i$ in $e_i$ satisfying the relation
\begin{equation}\label{CondicionMonomios}
 a_1 d_1 + \cdots + a_r d_r = w \ .
\end{equation}
 
If there are no natural numbers  $d_1,\dots,d_r \in \mathbb{N} \cup \{ 0 \}$ satisfying this equation, then $f$ is the zero map. }
\end{funchom}

An immediate corolary of the Main theorem is that, if the left side of Equation \ref{eqMainFed} is either null or negative, there are essentially no natural tensors:

\begin{cor}
There are no non-constant homogeneous natural $p$-tensors associated to Fedosov structures of weight $\delta\geq p$.
\end{cor}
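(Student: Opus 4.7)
The plan is to invoke Theorem \ref{MainThmFed} directly and analyse the set of non-negative integer solutions of the indexing equation
\[
2d_1 + 3d_2 + \ldots + (r+1)d_r \;=\; p-\delta
\]
under the hypothesis $\delta \geq p$. Since $p-\delta \leq 0$, this set is at most a single point, and in the strict case $\delta > p$ it is empty.

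First I would dispose of the strict case $\delta > p$: the left-hand side above is a sum of non-negative integers whereas the right-hand side is strictly negative, so no solutions can exist. The direct sum provided by Theorem \ref{MainThmFed} is therefore empty, and the space of natural $p$-tensors of weight $\delta$ reduces to zero.

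In the boundary case $\delta = p$ the equation becomes $2d_1+\ldots+(r+1)d_r = 0$, which forces $d_1 = \ldots = d_r = 0$. The sole surviving summand is $\mathrm{Hom}_{\Sp}(\mathbb{R}, T_{x_0}) = T_{x_0}^{\Sp}$. Tracing an element of this space back through the chain of bijections established in the proof of Theorem \ref{MainThmFed}, it corresponds to an $\Sp$-equivariant map $\prod_{i=1}^{\infty} N_i \to T_{x_0}$ which is independent of every argument, and hence, after reversing each reduction, to a natural tensor whose value on $(\omega,\nabla)$ is built algebraically from $\omega_{x_0}$ alone, with no dependence on the derivatives of $\omega$ or of $\nabla$. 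These are precisely the constant natural tensors in the sense of the statement, which is why no non-constant example can exist.

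The whole argument is book-keeping on top of Theorem \ref{MainThmFed}; the only mildly delicate point is the boundary case, where one must unwind the bijections to justify calling the elements of $T_{x_0}^{\Sp}$ constant natural tensors.
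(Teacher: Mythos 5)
Your proposal is correct and follows exactly the route the paper intends: the paper states this as an ``immediate corollary'' of Theorem \ref{MainThmFed}, obtained by observing that for $p-\delta<0$ the indexing equation has no non-negative integer solutions, while for $p-\delta=0$ only $d_1=\cdots=d_r=0$ survives, leaving $\mathrm{Hom}_{\Sp}(\mathbb{R},T_{x_0})$, i.e.\ the tensors built from $\omega_{x_0}$ alone (the ``constant'' ones, such as the symplectic form itself). Your careful unwinding of the boundary case is a welcome elaboration of what the paper leaves implicit, but it is the same argument.
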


\bigskip

\subsection{An application}

Let $\,V\,$ be a real vector space of finite dimension $\,2n$, let $\omega$ be a non-degenerate skew-symmetric bilinear form on $V$ and let $\,\Spp \,$ be the real Lie group of  $\mathbb{R}$-linear automorphisms that preserve $\omega$. 

The First Fundamental Theorem of the symplectic group (\cite{GW}) describes the vector space of $\Spp$-invariant linear maps $ V \otimes \stackrel{p}{\ldots} \otimes V \, \longrightarrow \, \RR \ : $

\begin{funthm}\label{MainTheoremSp} 
The real vector space $\,\mathrm{Hom}_{\Spp}\left( V \otimes \stackrel{p}{\ldots} \otimes V  \, , \, \RR \right) \,$
of invariant linear forms on $\, V \otimes \ldots \otimes V\,$ is null if $p$ is odd, whereas if $p$ is even it is spanned by 
$$ \omega_\sigma ((e_1 , \ldots , e_p)) := \omega (e_{\sigma(1)}, e_{\sigma(2)}) \ldots \omega (e_{\sigma(p-1)}, e_{\sigma(p)}) \ ,   $$
where $\sigma \in S_p .$
\end{funthm}

The invariant theory of the symplectic group, along with our Main Theorem, allows us to compute the space of natural functions for weights $w=-2$ and $w=-4$:

\begin{cor}
There are no non-constant homogeneous natural functions associated to Fedosov structures of weight $w=-2$, and for $w=-4$ there are three $\RR$-linearly independent natural functions.
\end{cor}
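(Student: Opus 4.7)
The plan is to reduce the statement to a pure representation-theoretic computation via Theorem \ref{MainThmFed} applied with $p = 0$ (hence $T_{x_0} = \RR$), and then to evaluate the resulting $\mathrm{Hom}_{\Sp}$ spaces using the First Fundamental Theorem of Sp together with the explicit symmetries of the normal-tensor spaces $N_m$.

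For weight $\delta = -2$, the equation $2d_1 + 3d_2 + \cdots + (r+1)d_r = 2$ admits only the solution $(d_1) = (1)$, so the Main Theorem identifies the space of invariants with $\mathrm{Hom}_{\Sp}(N_1, \RR)$. I would invoke the First Fundamental Theorem to describe $\mathrm{Hom}_{\Sp}((V^*)^{\otimes 4}, \RR)$ as spanned by the three index pairings via $\omega^{ij}$, and examine how each one restricts to $N_1$. The pairing $\omega^{il}\omega^{jk}T_{ijkl}$ that contracts the second and third indices vanishes because $T\in N_1$ is symmetric in those positions while $\omega$ is skew; the remaining two pairings coincide on $N_1$ by the very same symmetry; and the surviving one, $\omega^{ij}\omega^{kl}T_{ijkl}$, is killed using symmetry (3), which forces the part of $T$ antisymmetric in the first two indices to be symmetric in the last two, so the skew contraction against $\omega^{kl}$ drops out.

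For weight $\delta = -4$, the admissible exponents are $(d_1) = (2)$ and $(d_3) = (1)$, and the relevant space is
$$\mathrm{Hom}_{\Sp}(S^2 N_1, \RR) \oplus \mathrm{Hom}_{\Sp}(N_3, \RR).$$
The strategy is again to list, via the First Fundamental Theorem, the complete $\omega^{ij}$-contractions on $(V^*)^{\otimes 8}$ and $(V^*)^{\otimes 6}$ respectively, and then to quotient by the $S_2$-symmetry between the two $N_1$-factors together with the internal symmetries (1)--(3) of $N_1$ in the first summand, and by the symmetries of $N_3$ in the second. The goal is to verify that exactly three independent invariants survive in total; geometrically, one expects two quadratic scalars in the curvature (a ``total norm'' invariant $|R|^2$ and a ``Ricci norm'' invariant $|\rho|^2$, arising from $S^2 N_1$) together with one scalar linear in a second-order derivative of the curvature, coming from $N_3$.

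The main obstacle I anticipate lies in this $\delta = -4$ case: there are a priori $105$ index pairings on $(V^*)^{\otimes 8}$ and $15$ on $(V^*)^{\otimes 6}$, and the many identities generated by the symmetries (1)--(3) of $N_m$ (together with the $S_2$-factor in $S^2 N_1$) must be applied systematically to collapse them to the correct count. To keep the bookkeeping tractable, I would first decompose $N_1$ into its irreducible $\Sp$-summands (a ``Weyl-like'' and a ``Ricci-like'' piece, as in the standard Fedosov curvature decomposition) and invoke Schur's lemma to detect which summands of $S^2 N_1$ can host a non-trivial invariant, reducing the remaining work to a small number of representation-theoretic checks.
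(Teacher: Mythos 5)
Your treatment of the weight $w=-2$ case is correct and essentially identical to the paper's: reduce to $\mathrm{Hom}_{\Sp}(N_1,\RR)$ via the Main Theorem with $p=0$, list the three complete $\omega$-pairings given by the First Fundamental Theorem, kill two of them (up to coincidence) by the symmetry of $N_1$ in the second and third indices, and kill the survivor $\omega^{ij}\omega^{ka}T_{ijka}$ by symmetry (3), which makes the part of $T$ skew in the first two slots symmetric in the last two. That is exactly the paper's argument.

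For $w=-4$, however, your proposal is a plan rather than a proof, and it has a concrete gap on both sides of the count. For the upper bound, you defer the collapse of the $105+15$ pairings to a future Schur-lemma analysis that would require actually decomposing $N_1$ and $N_3$ into $\Sp$-irreducibles and determining the multiplicity of the trivial representation in $S^2N_1$ and $N_3$ --- none of which is carried out. The paper avoids this bookkeeping by a much cheaper device: the $\Sp$-equivariant isomorphism $T_{ijkl}\mapsto R_{ijkl}=T_{ijlk}-T_{ijkl}$ identifying $N_1$ with curvature-type tensors in $S^2T_{x_0}^*X\otimes\Lambda^2T_{x_0}^*X$ satisfying the Bianchi identity, after which the only nonzero contractions of $R_{ijkl}R_{abcd}$ are quickly seen to be $f_1=R_{ijkl}R^{ijkl}$ and $f_2=R_{ijk}{}^{k}R^{ijl}{}_{l}$ (the cross contraction $R_{ijkl}R^{ikjl}$ being $f_1/2$ by Bianchi), while the total symmetry of the last three indices of $N_3$ leaves only $f_3=T_{ijk}{}^{ijk}$. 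More importantly, you never address the lower bound: having at most three candidate invariants does not show there \emph{are} three. The paper settles linear independence of $f_1,f_2,f_3$ by evaluating them on an explicit Fedosov structure on $(\RR^4,\dd x_1\wedge\dd x_2+\dd x_3\wedge\dd x_4)$ with prescribed Christoffel symbols and checking the resulting functions are independent. Your route could in principle replace this by an exact multiplicity computation via Schur's lemma, but as written neither that computation nor any substitute for the explicit-example step is supplied, so the claim ``exactly three'' remains unproved in your proposal.
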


\begin{proof}
Let us fix $x_0\in X$ and a non-singular 2-form $\omega$ at $x_0$. Let us invoke the Main Theorem \ref{MainThmFed} for $p=0$ and $\delta=-2$. The only non-negative integer solution of the equation
$$2d_1 + \ldots + (r+1)d_r =p-\delta=2$$
is $d_1=1$. 

Therefore, the problem is reduced to computing $\Spp$-equivariant maps $N_1 \rightarrow \RR$. As the elements in $N_1$ are $4$-covariant tensors symmetric in the second and third indices, by the First Fundamental theorem of $\Sp$ it is sufficient to check that the map
$$T_{ijka} \longrightarrow \omega^{ij} \omega^{ka} T_{ijka}$$
is zero:
$$\omega^{ij} \omega^{ka} T_{ijka}=\frac{1}{2} \omega^{ij} \omega^{ka}(T_{ijka}-T_{jika})=0 \ ,$$
as the elements in $N_1$ verify that
$$T_{ijka}-T_{jika}=T_{ijak}-T_{jiak} \ .$$

Repeating the arguments for $p=0$ and $w=-4$, we obtain two solutions to the equation above: $d_1=2$ and $d_3=1$. Let us begin with solution $d_1=2$: we need to compute total index contractions of the expression $T_{ijkl}T_{abcd}$. Equivalently, we may replace this expression by applying the $\Sp$-equivariant linear isomorphism 
\begin{align*}
N_1 &\longrightarrow \mathcal{R} \\
T_{ijkl} &\longmapsto R_{ijkl}=T_{ijlk}-T_{ijkl},
\end{align*}
where $\mathcal{R}\subset S^2T_{x_0}^*X \otimes \Lambda^2 T_{x_0}^*X$ is the vector subspace of tensors $R$ that satisfy the Bianchi identity:
$$R_{ijkl}+R_{iklj}+R_{iljk}=0 \ .$$

Thus, let us compute the total index contractions of the expression $R_{ijkl}R_{abcd}$. As the contraction of the symmetric pair is zero, the possibilities are:
\begin{itemize}
\item $f_1=R_{ijkl}R^{ijkl}$.
\item $f_2=R_{ijk}^{\ \ \ k} R^{ijl}_{\ \ \ l}$.
\item $R_{ijkl}R^{ikjl}$, which is equal to $f_1/2$, by the Bianchi identity.
\end{itemize}
For $d_3=1$, the last three indices of any tensor in $N_3$ are symmetric, so there is only one possibility: $f_3=T_{ijk}^{\ \ \ ijk}$.

As for the linear independence of the three functions, by naturalness it is enough to check if they are independent at any given Fedosov manifold. For example, consider the Fedosov manifold $(\RR^4, \eta, \nabla)$, where $\eta=\d x_1 \wedge \d x_2 + \d x_3 \wedge \d x_4$ and $\nabla$ is the linear connection with the following Christoffel symbols (with the contravariant index lowered):
\begin{itemize}
\item $\Gamma_{ijk}=1$, for any $\{i,j,k\}$ permutation of $\{1,1,2\}$.
\item $\Gamma_{ijk}=x_1x_3x_4$, for any $\{i,j,k\}$ permutation of $\{2,3,4\}$.
\item $\Gamma_{ijk}=0$, for any other combination.
\end{itemize} 

Computing the natural functions in this manifold gives:

\begin{itemize}
\item $f_1=-4x_3^2x_4^2(- 4x_1^2 + 4x_1 + 1)$.
\item $f_2=2x_3^2x_4^2(4x_1^2 - 1)$.
\item $f_3=6$,
\end{itemize}
which are clearly $\RR$-linearly independent.
\qed
\end{proof}

\bigskip

\noindent \textbf{Acknowledgements.}  The authors would like to thank Professor Juan B.  Sancho de Salas for his generous advice.


\begin{thebibliography}{HD}

\normalsize
\baselineskip=17pt

\bibitem{RUI} Albuquerque, R., Picken, R. {On Invariants of Almost Symplectic Connections}. \emph{Math Phys Anal Geom} \textbf{2015}, \emph{18}, 8.

\bibitem{ABP} Atiyah, M.; Bott, R.; Patodi, V.K. {On the heat equation and the index theorem}. {\emph{Invent. Math.}} \textbf{1973}, \emph{19}, 279--330.

\bibitem{BERNIG} Bernig, A. {Natural operations on differential forms on contact manifolds}. \emph{Differ. Geom. Appl.} \textbf{2017}, {\emph{50}}, 34--51.

\bibitem{DUBROVSKIY} Dubrovskiy, S. {Moduli Space of Fedosov Structures}. \emph{Ann. Global Anal. Geom.} \textbf{2003} 27, 273-297.

\bibitem{ET} Epstein, D.B.A. y Thurston, W.P.: Transformation groups and natural bundles, \emph{Proceedings
of the London Mathematical Society} \textbf{1979}, \emph{38} (2): 219-236.

\bibitem{FEDOSOV_1} Fedosov, B. \emph{A simple geometric construction of deformation quantization}, J. Diff. Geom.  \textbf{1994} 40,  213--238.

\bibitem{FEDOSOV} Fedosov, B. \emph{Deformation Quantization
and Index Theory}; Wiley, 1995.

\bibitem{FOX} Fox, J. F. {Remarks on symplectic sectional curvature}. \emph{Differential Geom. Appl.} \textbf{2017}, 50, 52–70.

\bibitem{FHBAMS} Freed, D.S.; Hopkins, M.J. {Chern-Weil forms and abstract homotopy theory}. \emph{Bull. Amer. Math. Soc.} \textbf{2013}, {\emph{50}}, 431--468.

\bibitem{GELFAND} Gelfand, I., Retakh, V., Shubin, M.,
\emph{Fedosov Manifolds},
Adv. Math. {\bf 136:1}, 104--140 (1998).

\bibitem{GILKEY} Gilkey, P.B. {Curvature and the eigenvalues of the Dolbeault complex for Kaehler manifolds}. \emph{Adv. Math.} \textbf{1973}, {\emph{11}}, 311--325. 

\bibitem{GILKEY_ANNALS} Gilkey, P. {Local invariants of an embedded Riemannian manifold}. \emph{Ann. Math.} \textbf{1975}, {\emph{102}}, 187--203.

\bibitem{GPS_RIEMANN}
Gilkey, P.; Park, J.H.; Sekigawa, K. {Universal curvature identities}. \emph{Diff.
Geom. App.} \textbf{2011}, {\emph{62}} 814--825.

\bibitem{GPS_KAHLER}
Gilkey, P.; Park, J.H.; Sekigawa, K. {Universal curvature identities and Euler Lagrange Formulas for Kaehler manifolds}. \emph{J. Math. Soc. Japan} \textbf{2016}, {\emph{68}}, 459--487.

\bibitem{GW} Goodman, R., Wallach, N. R.: Representations and Invariants of the Classical Groups. 3rd
corrected printing, Cambridge Univ. Press (2003)

\bibitem{GN_FILOMAT} Gordillo, A.; Navarro, J. {On moduli spaces for finite-order jets of linear connections}. \emph{Filomat} \textbf{2017}, {\emph{31}}, 2035--2044.

\bibitem{GMN_RACSAM} Gordillo-Merino, A.; Mart\'inez-Boh\'orquez, R.; Navarro, J. {On the uniqueness of the torsion and curvature operators}. \emph{Revista de la Real Academia de Ciencias Exactas, F\'isicas y Naturales. Serie A. Matem\'aticas} \textbf{2020}, {\emph{114}}, 1--13.

\bibitem{GMN_MATH} Gordillo-Merino, A.; Mart\'inez-Boh\'orquez, R.; Navarro-Garmendia, J. {On invariant operations of
a linear connection and an orientation}. \emph{Mathematics} \textbf{2021}, {\emph{9}}, 2577.

\bibitem{KMCMM} Khavkine, I.; Moretti, V. {Analytic Dependence is an Unnecessary Requirement in Renormalization of Locally Covariant QFT}. \emph{Commun. Math. Phys.} \textbf{2016}, {\emph{344}}, 581--620.

\bibitem{KMM} Khavkine, I.; Melati, A.; Moretti, V. {On Wick Polynomials of Boson Fields in Locally Covariant Algebraic QFT}. \emph{Ann. Henri Poincar\'{e}} \textbf{2019}, {\emph{20}}, 929--1002.

\bibitem{KMSBOOK} Kol\'a\v{r}, I.; Michor, P.W.; Slov\'ak, J. \emph{Natural Operations in Differential Geometry}; Springer: Berlin/Heidelberg, Germany, 1993.

\bibitem{LANCZOS} Lanczos, C. A Remarkable Property of the Riemann-Christoffel Tensor in Four Dimensions. \emph{Annals of Mathematics} \textbf{1938}, 39: 4, 842–850. 


\bibitem{M_BOOK} Mehra, J. \emph{Einstein, Hilbert, and the Theory of Gravitation. Historical Origins of General Relativity Theory}; Reidel: Dordrecht, Holland, 1974.




\bibitem{NNT_ARCHIVUM} Navarro, A.; Navarro, J.; Tejero-Prieto, C. {Natural operations on holomorphic forms}. \emph{Arch. Math. (Brno)} \textbf{2018}, {\emph{54}}, 239--254.



\bibitem{NS_EINSTEIN} Navarro, J.; Sancho, J.B. {On the naturalness of Einstein's equation}. \emph{J. Geom. Phys.} \textbf{2008}, \emph{58}, 1007--1014.

\bibitem{NS_PEETRE} Navarro, J.; Sancho, J.B. {Peetre-Slov\'{a}k theorem revisited}. \emph{arXiv} \textbf{2014}, arXiv:1411.7499 2014.

\bibitem{NS_DGA} Navarro, J.; Sancho, J.B. {Natural operations on differential forms}. \emph{Differ. Geom. Appl.} \textbf{2015}, {\emph{38}}, 159--174.

\bibitem{S_JDG} Slov\'ak, J. {On invariant operations on a manifold with connection or metric}. \emph{J. Differ. Geom.} \textbf{1992}, {\emph{36}}, 633--650.

\bibitem{STRICHARTZ} Strichartz, R.S. {Linear Algebra of Curvature Tensors and Their Covariant Derivatives}. \emph{Can. J. Math.} {\bf 1988} 40:5, 1105--1143.

\bibitem{THURSTON} Thurston, W. P.; Levy, S. {Three-dimensional geometry and topology. Vol. 1}. \emph{Princeton Mathematical Series} \textbf{1997}, 35. 

\bibitem{T_JMP} Timashev, D.A. {On differential characteristic classes of metrics and connections}. \emph{J. Math. Sci.} \textbf{2017}, {\emph{223}}, 763--774.

\bibitem{VAISMAN} Vaisman, I. {Symplectic curvature tensors}. \emph{Monatsh. Math.} \textbf{1985}, 100(4), 299–327.

\end{thebibliography}
\end{document}